\newcommand{\pa}{\partial}
\newtheorem{thm}{Theorem}[section]
\newtheorem{prop}[thm]{Proposition}
\newtheorem{lem}[thm]{Lemma}
\newtheorem{remark}[thm]{Remark}
\newcommand{\supp}{\mathop{\mathrm{supp}}\nolimits}
\newcommand{\eps}{\varepsilon}
\def \<{\langle}
\def \>{\rangle}
\def \R{\mathbb R}
\def \H{{\cal H}}
\def \H^0{{\cal H}^0 or}
\def \p{\partial}
\def \n{\nabla}
\def \beq{\begin{equation}}
\def \eeq{\end{equation}}
\def \n{\nabla}
\def \eref{\eqref}
\newcommand{\tr}{\mathop{\mathrm{tr}}}
\newcommand{\Vol}{\operatorname{Vol}}
\newcommand{\cN}{\mathcal{N}}
\newtheorem{cor}[thm]{Corollary}
\begin{document}



\title[Heat trace for drifting Laplacian and Schr\"odinger operators]{The heat trace for the drifting Laplacian and Schr\"odinger operators on manifolds}


\author{Nelia Charalambous}

\author{Julie Rowlett}


\begin{abstract} We study the heat trace for both the drifting Laplacian  as well as Schr\"odinger operators on compact Riemannian manifolds. In the case of a finite regularity potential or weight function, we prove the existence of a partial (six term) asymptotic expansion of the heat trace for small times as well as a suitable remainder estimate.  We also demonstrate that the more precise asymptotic behavior of the remainder is determined by and conversely distinguishes higher (Sobolev) regularity on the potential or weight function.  In the case of a smooth weight function, we determine the full asymptotic expansion of the heat trace for the drifting Laplacian for small times. We then use the heat trace to study the asymptotics of the eigenvalue counting function. In both cases the Weyl law coincides with the Weyl law for the Riemannian manifold with the standard Laplace-Beltrami operator.  We conclude by demonstrating isospectrality results for the drifting Laplacian on compact manifolds.
\end{abstract}

\maketitle

\section{Introduction}
Heat equation methods are incredibly useful; see for example, {\it The Ubiquitous Heat Kernel},
\cite{tubhk}.  One of the most standard ways to prove the spectral theorem for the Laplace operator on a Riemannian manifold is via the associated heat semi-group.  Moreover, the heat equation provides a connection between probability theory and analysis.  As noted in Kac's famous paper, one can thereby demonstrate analytical results using probabilistic methods \cite{kac}.  More recently, van den Berg and Srisatkunarajah used probabilistic methods to demonstrate results for the short time asymptotic behavior of the heat trace on polygonal domains \cite{vdbs}.

The heat trace of a Schr\"odinger operator over a manifold reflects many of the geometric quantities of the manifold intertwined with the potential term. It is also used to determined the rate at which the eigenvalues of the operator tend to infinity. In this article we focus on short time asymptotic expansions of the heat trace for Schr\"odinger operators on manifolds with an irregular potential, as well as the short time asymptotic expansions of the heat trace for drifting Laplacians on weighted manifolds.

To be more precise, we consider $(M,g)$, a smooth, compact, Riemannian manifold of dimension $n$.  The Laplace operator, $\Delta$,  is determined by the Riemannian metric, with
$$\Delta := -\sum_{i,j=1} ^n \frac{1}{\sqrt{\det(g)}} \pa_i g^{ij} \sqrt{\det(g)} \pa_j.$$
In this article, we will consider the Schr\"odinger operator,
$$\Delta_V := \Delta + V,$$
where the potential satisfies $V \in L^\infty (M)$.  We also consider the drifting Laplacian,
$$\Delta_f := \Delta + \nabla f \cdot \nabla.$$
Above, we assume that $f$ is a function on $M$ which satisfies $\nabla f \in L^\infty(M)$ and $\Delta f \in L^\infty (M)$.  The operator $\Delta_f$ is also known as the weighted Laplacian or Bakry-\'Emery Laplacian.  The Schr\"odinger operator is a self-adjoint operator which maps $H^2 (M, g) \to L^2 (M,g)$.  The drifting Laplacian is also a self-adjoint operator with respect to the weighted volume measure
$$d\mu_f = e^{-f} d\mu,$$
where $d\mu$ is the volume measure given by the Riemannian metric.

Our first result is the existence of a small time asymptotic expansion for the trace of the heat kernels associated to $\Delta_V$.  We determine the first few coefficients in this expansion as well as a remainder estimate of order $ o(t^{2-n/2})$ whenever the potential $V$ is in $L^\infty$. Let $e^{-t\Delta_V}$ denote the heat semigroup for the operator $\Delta+V$, and $e^{-t\Delta}$ denote the heat semigroup for the operator $\Delta$.  We note that the operator $\Delta + V$ is a self-adjoint operator which maps $H^2 (M,g) \to L^2(M,g)$.

\begin{prop} \label{prophttr}Let $(M, g)$ be a smooth, compact, Riemannian manifold of dimension $n$.  Let $V \in L^\infty (M)$.  Then $e^{-t \Delta_V}$ is trace class and has a short time asymptotic expansion as $t \downarrow 0$ given by
\begin{equation*}
\begin{split}
\tr e^{-t \Delta_V} = & \frac{\Vol (M)}{(4\pi t)^{n/2}} + \frac{t}{(4\pi t)^{n/2}} \int_M \left( \frac{1}{6} \, K(z)   - V(z)  \right) \;d\mu (z) \\
& +  \frac{t^2}{(4\pi t)^{n/2}}  \left[a_{0,2} + \frac{1}{2} \|V\|_{L^2} ^2 - \frac{1}{6} \int_M  K(z) V(z)  \;d\mu (z)  \right] + o(t^{2-n/2}),
\end{split}
\end{equation*}
where $K(z)$ denotes the scalar curvature of $(M,g)$, and $a_{0,2}$  is the coefficient of $(4\pi t)^{-n/2}t^{2}$
in the short time asymptotic expansion of $e^{-t \Delta}$ given in \eref{a0_i}.
\end{prop}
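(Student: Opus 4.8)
The plan is to treat $V$ as a bounded perturbation of $\Delta$ and expand the semigroup $e^{-t\Delta_V}$ by the Duhamel principle. Writing
$$e^{-t\Delta_V} = e^{-t\Delta} - \int_0^t e^{-(t-s)\Delta} V e^{-s\Delta_V}\,ds$$
and iterating yields the Dyson (Volterra) series $e^{-t\Delta_V}=\sum_{k\ge 0}(-1)^k I_k(t)$, where
$$I_k(t)=\int_{t>s_1>\cdots>s_k>0} e^{-(t-s_1)\Delta}V e^{-(s_1-s_2)\Delta}V\cdots V e^{-s_k\Delta}\,ds_1\cdots ds_k.$$
Since $V\in L^\infty(M)$ and $e^{-\tau\Delta}$ is trace class, each $I_k$ is trace class; establishing trace-norm convergence of the series simultaneously proves that $e^{-t\Delta_V}$ is trace class and reduces the proposition to computing $\tr I_0-\tr I_1+\tr I_2$ up to order $t^{2-n/2}$ and bounding the tail $\sum_{k\ge 3}\tr I_k$.

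For the three leading terms I would argue as follows, writing $p$ for the heat kernel of $\Delta$. For $\tr I_0=\tr e^{-t\Delta}$ I quote the classical on-diagonal expansion \eqref{a0_i}, which supplies $\Vol(M)$, the term $\tfrac16\int_M K$, and the coefficient $a_{0,2}$. For $\tr I_1$, cyclicity of the trace together with the semigroup law $e^{-(t-s)\Delta}e^{-s\Delta}=e^{-t\Delta}$ collapses the $s$-integral to $t\,\tr(V e^{-t\Delta})=t\int_M V(z)\,p(t,z,z)\,d\mu(z)$; inserting the pointwise expansion $p(t,z,z)=(4\pi t)^{-n/2}\big(1+\tfrac16 K(z)t+O(t^2)\big)$ gives $t(4\pi t)^{-n/2}\big(\int_M V+\tfrac{t}{6}\int_M KV+O(t^2)\big)$, which enters the alternating series with a minus sign and thus contributes $-\int_M V$ at order $t$ and $-\tfrac16\int_M KV$ at order $t^2$, with an $O(t^{3-n/2})$ error. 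For $\tr I_2$, the same manipulation reduces the integrand to $\tr(Ve^{-a\Delta}Ve^{-b\Delta})$ with $a+b=t$, and the exact identity $\int_M p(a,x,y)p(b,x,y)\,d\mu(x)=p(t,y,y)$ (symmetry plus the semigroup law) lets me write this trace as $\int_M V(y)^2 p(t,y,y)\,d\mu(y)$ plus an error measuring the local oscillation of $V$. The leading piece equals $(4\pi t)^{-n/2}\|V\|_{L^2}^2+O(t^{1-n/2})$, and since the simplex $\{t>s_1>s_2>0\}$ has area $t^2/2$, this produces the asserted coefficient $\tfrac12\|V\|_{L^2}^2$.

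The main obstacle I expect is the finite regularity of $V$: because $V$ is only $L^\infty$, none of these remainders can be treated by the smooth symbol calculus, and the delicate point is the oscillation error in $\tr I_2$,
$$R(a,b)=\int_M V(y)\Big(\int_M \big(V(x)-V(y)\big)\,p(a,x,y)\,p(b,x,y)\,d\mu(x)\Big)\,d\mu(y),$$
which must be shown to be $o(t^{-n/2})$ uniformly in the splitting $a+b=t$. I would treat the product kernel $(4\pi t)^{n/2}p(a,x,y)p(b,x,y)$ as an approximate identity in $y$, noting that its total mass in $x$ is $(4\pi t)^{n/2}p(t,y,y)\to 1$, and combine the Gaussian upper bounds for the heat kernel with the $L^2$-continuity of mollification: for $V\in L^2(M)$ the mollified difference tends to $0$ in $L^2$, giving $R(a,b)=o(t^{-n/2})$, with the implied constant made uniform in $a,b$ by density of smooth functions. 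Multiplying by the simplex area $t^2/2$ keeps this within the claimed $o(t^{2-n/2})$ remainder. Finally, for the tail I would bound $\tr|I_k|$ by distributing the total heat time $t$ across the $k+1$ semigroup factors and applying H\"older's inequality for Schatten norms, obtaining $\tr|I_k|\le C^k\|V\|_\infty^k\, t^{k-n/2}/k!$; summing over $k\ge 3$ yields an $O(t^{3-n/2})=o(t^{2-n/2})$ contribution, which completes the expansion.
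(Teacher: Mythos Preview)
Your proposal is correct and follows the same route as the paper: Duhamel/Dyson expansion of $e^{-t\Delta_V}$, explicit computation of the traces of the zeroth, first, and second iterates, and a Schatten--norm tail bound of the form $\|I_k\|_{\mathrm{tr}}\le C^k t^{k-n/2}/k!$ to control $\sum_{k\ge 3}$. The only difference is in the bookkeeping for $\tr I_2$: the paper first performs a time substitution and compares the product $H_0(tv,\cdot)H_0(t(1-v),\cdot)$ to the single kernel $(4\pi t)^{-n/2}H_0(tv(1-v),\cdot)$ via the near-diagonal behaviour $u_0\approx 1$, and then uses $\langle e^{-tv(1-v)\Delta}V,V\rangle\to\|V\|_{L^2}^2$; you instead invoke the exact identity $\int_M p(a,x,y)p(b,x,y)\,d\mu(x)=p(t,y,y)$ to split off the main term $\int_M V^2\,p(t,\cdot,\cdot)$ and treat the residual $R(a,b)$ directly as an approximate-identity (mollification) error, with uniformity in the splitting coming from the fact that the effective scale $ab/t\le t/4$. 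Both arguments rest on the same $L^2$-continuity of mollification, so this is a packaging difference rather than a genuinely new method; your variant has the mild advantage of avoiding the parametrix comparison $u_0^2\approx u_0$.
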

We prove Proposition \ref{prophttr} in Section \ref{s-heat}. In Corollary \ref{httr_drift} we will see that that a similar expansion also holds for $\Delta_f$ whenever $\Delta f,\n f \in L^\infty$.

Recall that the spectrum, $\sigma(H)$, of a self adjoint operator $H$ on a Hilbert space $\mathcal{H}$ consists of all points $\lambda \in \mathbb{R}$ such that $H-\lambda I$ fails to be invertible. Using classical functional analytic techniques one can show that the operators, $\Delta_V$ and $\Delta_f$,  only have discrete spectrum. In other words, for $H=\Delta_V$ or $H=\Delta_f$, $\sigma(H)$ consists only of eigenvalues $\lambda$ such that there exists an element $u$ in the domain of the operator, $H$, satisfying
\[
H u = \lambda u.
\]
Moreover, these eigenvalues only accumulate at infinity.  An immediate consequence of the existence of a short-time asymptotic expansion is Weyl's law for the growth rate of its eigenvalues.
\begin{cor} \label{th-weyl}
Let $(M,g)$ be a compact, smooth, $n$-dimensional Riemannian manifold with Laplace operator, $\Delta$.  Let $V \in L^\infty(M,g)$.
Let $\{\lambda_k\}$ denote the eigenvalues of the Schr\"odinger operator, $\Delta_V = \Delta + V$ with $\lambda_1 \leq \lambda_2 \leq \ldots$.  For a function $f$ on $M$ with
$$\Delta f \textrm{ and } \nabla f \in L^\infty(M, g),$$
let $\{\mu_k\}$ denote the eigenvalues of the drifting Laplacian $\Delta_f = \Delta + \nabla f \cdot \nabla$ with $\mu_1 \leq \mu_2 \leq \ldots $.
Let
$$N_V(\Lambda) := \# \{ \lambda_k \leq \Lambda\}, \textrm{ and } N_f (\Lambda) := \# \{ \mu_k \leq \Lambda \}.$$
Then
$$\lim_{k \to \infty} \frac{N_V(\Lambda) (2\pi)^n}{\Lambda^{n/2} \omega_n} = \Vol (M) = \lim_{k \to \infty} \frac{N_f(\Lambda) (2\pi)^n}{\Lambda^{n/2} \omega_n},$$
where above $\Vol(M)$ denotes the volume of $M$ with respect to the Riemannian metric $g$, and $\omega_n$ denotes the volume of the unit ball in $\R^n$.
\end{cor}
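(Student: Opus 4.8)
The plan is to deduce Weyl's law from the leading term of the short-time heat trace expansion by a Tauberian argument. The starting observation is that, writing $\{\lambda_k\}$ for the discrete, bounded-below eigenvalues of $\Delta_V$, the heat trace is the Laplace--Stieltjes transform of the counting function,
$$\tr e^{-t\Delta_V} = \sum_k e^{-t\lambda_k} = \int_{-\infty}^\infty e^{-t\lambda}\,dN_V(\lambda),$$
and by Proposition \ref{prophttr} this transform satisfies $\tr e^{-t\Delta_V} \sim \Vol(M)(4\pi t)^{-n/2}$ as $t \downarrow 0$. Thus the problem reduces to inverting this transform asymptotically, for which Karamata's Tauberian theorem is the natural tool.

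Before applying Karamata, which requires a nonnegative measure supported on $[0,\infty)$, I would first reduce to the nonnegative case. Since $\Delta \geq 0$ and $V \in L^\infty$, the operator $\Delta_V$ is bounded below by $-\|V\|_{L^\infty}$; setting $c := \|V\|_{L^\infty}$, the shifted operator $\Delta_V + c$ has nonnegative spectrum, its counting function is $N_V(\,\cdot\, - c)$, and its heat trace is $e^{ct}\tr e^{-t\Delta_V}$, which has the same leading behavior $\Vol(M)(4\pi t)^{-n/2}$ as $t \downarrow 0$ because $e^{ct} \to 1$. Karamata's theorem then yields, with exponent $\alpha = n/2$ and constant $C = \Vol(M)(4\pi)^{-n/2}$,
$$N_V(\Lambda - c) \sim \frac{C}{\Gamma\!\left(\tfrac n2 + 1\right)}\,\Lambda^{n/2}, \qquad \Lambda \to \infty.$$
Replacing $\Lambda$ by $\Lambda + c$ alters only lower-order terms, so the same asymptotic holds for $N_V(\Lambda)$.

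It then remains to check that the constant matches the stated normalization. Using $\omega_n = \pi^{n/2}/\Gamma\!\left(\tfrac n2 + 1\right)$ together with $(4\pi)^{n/2} = 2^n\pi^{n/2}$, one computes
$$\frac{\Vol(M)}{(4\pi)^{n/2}\,\Gamma\!\left(\tfrac n2 + 1\right)} = \frac{\Vol(M)\,\omega_n}{(2\pi)^n},$$
which is precisely the claimed limit for $N_V$. The argument for the drifting Laplacian is identical, using the expansion of $\tr e^{-t\Delta_f}$ furnished by Corollary \ref{httr_drift} in place of Proposition \ref{prophttr}; here $\Delta_f \geq 0$ is already nonnegative with respect to $d\mu_f$, so no shift is needed. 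The only point requiring genuine care is the justification of the Tauberian step: confirming that $dN_V$ is a locally finite positive measure with support bounded below, which is guaranteed by the discreteness of the spectrum and the lower bound above, so that the hypotheses of Karamata's theorem are indeed satisfied. The remainder is a direct substitution of constants.
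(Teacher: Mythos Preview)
Your proof is correct and follows the same overall strategy as the paper: deduce Weyl's law from the leading heat-trace asymptotic via Karamata's Tauberian theorem. The execution differs in two respects worth noting. First, the paper does not invoke Karamata's theorem in its black-box form; instead it states the lemma for continuous test functions, proves a small extension (Lemma~\ref{klemma2}) to cover piecewise continuous $g$, and then applies it to the explicit function $g(x)=1/x$ on $(e^{-1},1)$, zero elsewhere, so that $t^{n/2}\int g(e^{-t\lambda})e^{-t\lambda}\,d\nu(\lambda)=t^{n/2}N(1/t)$ directly. Your appeal to the standard Tauberian statement is shorter and entirely legitimate; the paper's route is more self-contained and makes the mechanism visible. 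Second, you dispose of negative eigenvalues by shifting the operator by $\|V\|_\infty$, whereas the paper argues directly that there are only finitely many negative eigenvalues and that the corresponding finite sum is killed by the factor $t^{n/2}$; both are clean. For the drifting Laplacian, the paper does not cite Corollary~\ref{httr_drift} (which appears later) but instead uses the unitary equivalence $\Delta_f\cong\Delta+\tfrac12\Delta f+\tfrac14|\nabla f|^2$ to reduce immediately to the Schr\"odinger case just treated; your route via Corollary~\ref{httr_drift} is logically equivalent since that corollary itself rests only on the unitary equivalence and Proposition~\ref{prophttr}, so no circularity arises.
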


As one readily sees from the above result, Weyl's law for the drifting Laplacian is in fact independent of the weight function $f$, whenever $\nabla f$ and $\Delta f$ are in $L^\infty$. In fact, Weyl's law only depends on the dimension of the manifold and its Riemannian volume. Although some experts provide (without proof)  the correct Weyl law  in the case $f\in C^2$ (see for example p. 640 of \cite{css}) it also often appears in literature (also without proof) with the weighted volume of the manifold. We anticipate that our paper will not only clarify this inconsistency, but also provide a more general setting in which the result is true.  We prove Corollary \ref{th-weyl} in \S \ref{sskw} using the the short time asymptotic behavior of the heat trace together with the Karamata Lemma.

The independence of Weyl's law from the weight function $f$ for the drifting Laplacian comes in contrast to estimates for its eigevalues, which {\it do } depend on the weight function.  For example in \cite{asma1}, A. Hassennezhad demonstrates upper bounds for $\Delta_V$ and hence $\Delta_f$, under the assumption that $V$ is continuous.
J.-Y. Wu and P. Wu demonstrate lower bounds for the eigenvalues of $\Delta_f$ under the assumption that the $f$-Ricci curvature is bounded below \cite{wuwu}.  Their estimates depend on lower bounds for the Bakry-\'Emery Ricci tensor, hence they also rely on $f$ being a $C^2$ function. As a result, one would also not anticipate that a Weyl's law would hold for such general $f$ as in Corollary \ref{th-weyl}, nor that it would be independent of $f$.
Our work complements the upper and lower eigenvalue estimates obtained by other authors and it may be useful to combine it with those results to obtain sharper eigenvalue estimates.

We will show in Section \S \ref{S3} that the remainder estimate in the asymptotic expansion given in Proposition \ref{prophttr} is determined by, and conversely it also distinguishes the regularity of the potential function.
\begin{thm}  \label{thm3.2} Assume $V \in L^\infty$.  Then the heat trace has an asymptotic expansion as $t \downarrow 0$ of the form
\begin{equation*}
\begin{split}
\tr e^{-t\Delta_V} & =  \frac{\Vol (M)}{(4\pi t)^{n/2}} +  \frac{t}{3(4\pi t)^{n/2}}  \int_M \left(\frac{1}{6} \, K(z)   -  V(z) \right)\, d\mu (z) \\
 &  + \frac{t^2}{(4\pi t)^{n/2}} \left[ a_{0,2}  + \frac{1}{2} \|V\|_{L^2}^2
 - \frac{1}{6}  \int_M K(z) V(z)  \,d\mu (z)    \right] + O(t^{3-n/2}),
\end{split}
\end{equation*}
if and only if $V \in H^1$.
\end{thm}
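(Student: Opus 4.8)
The plan is to start from the Duhamel (Dyson) series for the perturbed semigroup,
\[
\tr e^{-t\Delta_V}=\sum_{k\ge0}(-1)^k\,\tr\int_{\Sigma_k}e^{-s_0\Delta}V e^{-s_1\Delta}\cdots V e^{-s_k\Delta}\,ds,\qquad \Sigma_k=\{s_0+\cdots+s_k=t,\ s_i\ge0\},
\]
exactly as in the proof of Proposition \ref{prophttr}. Using cyclicity of the trace, the Chapman--Kolmogorov identity $\int_M p_a(x,y)p_b(y,x)\,d\mu(y)=p_{a+b}(x,x)$, and the on-diagonal expansion $p_t(x,x)=(4\pi t)^{-n/2}\big(1+\tfrac{t}{6}K(x)+O(t^2)\big)$, the terms $k=0,1$ and the ``diagonal'' part of $k=2$ reproduce precisely the explicit terms through order $t^{2-n/2}$, while the tail $k\ge3$ is $O(t^{3-n/2})$ using only $\|V\|_{L^\infty}$. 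Each of these contributions, beyond the explicitly named terms, is $O(t^{3-n/2})$ with constants depending only on $\|V\|_{L^\infty}$ and the geometry, so none of them sees any regularity of $V$. Thus the entire content of the theorem is concentrated in a single remaining piece of the $k=2$ term, and it suffices to show that this piece is $O(t^{3-n/2})$ if and only if $V\in H^1$.

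To isolate that piece I write the $k=2$ term as $\int_0^t(t-b)\,\tr\!\big(e^{-(t-b)\Delta}V e^{-b\Delta}V\big)\,db$ and use the symmetric identity $V(x)V(y)=\tfrac12\big(V(x)^2+V(y)^2\big)-\tfrac12\big(V(x)-V(y)\big)^2$. After Chapman--Kolmogorov the first group is the diagonal part already accounted for; the second group is
\[
E(t):=-\tfrac12\int_0^t(t-b)\,\Psi(t-b,b)\,db,\qquad \Psi(a,b):=\int_M\!\int_M p_a(x,y)\,p_b(x,y)\,\big(V(x)-V(y)\big)^2\,d\mu(x)\,d\mu(y)\ge0.
\]
The key classical input is the Dirichlet-form characterization of $H^1$: setting
\[
D(\tau):=\big\langle (I-e^{-\tau\Delta})V,\,V\big\rangle=\tfrac12\int_M\!\int_M p_\tau(x,y)\big(V(x)-V(y)\big)^2\,d\mu(x)\,d\mu(y),
\]
the spectral theorem gives $D(\tau)\le \tau\|\nabla V\|_{L^2}^2$ with $D(\tau)/\tau\uparrow\|\nabla V\|_{L^2}^2$ as $\tau\downarrow0$, so that $V\in H^1$ if and only if $\sup_{0<\tau\le1}D(\tau)/\tau<\infty$. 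The task therefore reduces to comparing $E(t)$ with $D$.

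The heart of the argument is a two-sided comparison of $\Psi$ with $D$. Writing $\sigma=\sigma(a,b)=ab/(a+b)$, the Gaussian upper and lower bounds for the heat kernel on the compact manifold $(M,g)$ (for $0<s\le1$) give $p_a(x,y)p_b(x,y)\asymp (ab)^{-n/2}e^{-c\,d(x,y)^2/\sigma}$, and comparing the Gaussian weight $e^{-c\,d^2/\sigma}$ with a genuine heat kernel $p_{\kappa\sigma}$ produces constants for which
\[
C_\ell\,(a+b)^{-n/2}D(\kappa_\ell\sigma)\ \le\ \Psi(a,b)\ \le\ C_u\,(a+b)^{-n/2}D(\kappa_u\sigma).
\]
Inserting these into $E(t)$, using $\sigma\le t/4$ together with monotonicity of $D$ for the upper bound and restricting to $b\in[t/4,3t/4]$ (where $\sigma\asymp t$) for the lower bound, I obtain the clean two-sided estimate
\[
c_1\,t^{2-n/2}\,D(\beta_1 t)\ \le\ |E(t)|\ \le\ c_2\,t^{2-n/2}\,D(\beta_2 t).
\]
Consequently $E(t)=O(t^{3-n/2})$ if and only if $D(\beta t)=O(t)$ as $t\downarrow0$, which by the Dirichlet-form characterization is equivalent to $V\in H^1$; this settles both implications. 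The main obstacle is precisely this two-sided comparison of $\Psi$ with $D$: it requires sharp Gaussian upper \emph{and} lower bounds for $p_s$ on a compact manifold together with a careful passage between the Gaussian weights $e^{-c\,d^2/\sigma}$ and honest heat kernels at the comparable time scale, the lower bound being the more delicate since it must survive integration against the possibly very rough function $\big(V(x)-V(y)\big)^2$.
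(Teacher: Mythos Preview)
Your proof is correct and takes a genuinely different route from the paper's.

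Both arguments begin identically: Duhamel expansion, the $k=0,1$ terms and the tail $k\ge 3$ are $O(t^{3-n/2})$ using only $\|V\|_{L^\infty}$, so everything comes down to $\tr W_2(t)$. From this point the two proofs diverge.

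The paper does not use the polarization identity. Instead it approximates the \emph{product} of heat kernels directly by a single heat kernel: using the parametrix expansion (and the near-diagonal estimate $u_0(y,z)^2\approx u_0(y,z)$) it shows
\[
\tr W_2(t)=\frac{t^2}{2}(4\pi t)^{-n/2}\int_0^1\big(e^{-tv(1-v)\Delta}V,\,V\big)\,dv+O(t^{3-n/2}),
\]
an exact formula up to the stated error. It then expands in the eigenbasis and uses the spectral identity
\[
\lim_{t\to 0^+}\frac{1}{t}\big[(e^{-t\Delta}V,V)-\|V\|_{L^2}^2\big]=-\|\nabla V\|_{L^2}^2
\]
(together with $\int_0^1 v(1-v)\,dv=1/6$) to conclude. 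So the paper replaces $p_a p_b$ by $(4\pi t)^{-n/2}p_{ab/t}$ via the parametrix, whereas you replace $p_a p_b$ by $C(a+b)^{-n/2}p_{\kappa\sigma}$ via Gaussian two-sided bounds.

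Your polarization $V(x)V(y)=\tfrac12(V(x)^2+V(y)^2)-\tfrac12(V(x)-V(y))^2$ together with Chapman--Kolmogorov splits off the $\tfrac12\|V\|_{L^2}^2$ term exactly and isolates the regularity-sensitive piece $E(t)$ from the outset; the paper only sees $\|V\|_{L^2}^2$ after invoking $L^2$-convergence of the heat flow. Your two-sided comparison $\Psi(a,b)\asymp (a+b)^{-n/2}D(\kappa\sigma)$ then reduces the problem to the Dirichlet-form characterization $V\in H^1\iff\sup_{0<\tau\le1}D(\tau)/\tau<\infty$, which is the same endpoint the paper reaches (it computes the same limit, just without naming $D(\tau)$).

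What each approach buys: the paper's parametrix route is more elementary in its inputs (no Li--Yau lower bound, only the local on-diagonal expansion and the spectral theorem) and yields an \emph{equality} up to $O(t^{3-n/2})$, so it could in principle produce the next coefficient $-\tfrac{1}{12}\|\nabla V\|_{L^2}^2$ at order $t^{3-n/2}$ when $V$ is smoother. Your Gaussian-bounds route is more robust: it would carry over verbatim to settings with two-sided heat kernel estimates but no smooth parametrix (rough metrics, metric measure spaces satisfying curvature-dimension conditions), and the polarization step makes the mechanism---that the obstruction is precisely the squared increment $(V(x)-V(y))^2$---completely transparent.
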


This regularity result was inspired by the work of H. Smith and M. Zworski for Schr\"odinger operators on $\R^n$ \cite{smith-z}.

\begin{cor} \label{cor-isosp}
Assume $V, \tilde{V}\in L^\infty$ and that the Schr\"odinger operators $\Delta_V, \Delta_{\tilde{V}}$ are isospectral. Then the potential, $V$, is in $H^1$ if and only if $\tilde{V}$ is in $H^1$.
\end{cor}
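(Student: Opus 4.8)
The plan is to combine the fact that the heat trace is a spectral invariant with the sharp regularity characterization furnished by Theorem \ref{thm3.2}. Since $\tr e^{-t\Delta_V} = \sum_k e^{-t\lambda_k}$ depends only on the eigenvalues (with multiplicity), isospectrality of $\Delta_V$ and $\Delta_{\tilde V}$ gives $\tr e^{-t\Delta_V} = \tr e^{-t\Delta_{\tilde V}}$ for every $t>0$. As the assertion is symmetric in $V$ and $\tilde V$, it suffices to prove a single implication: assuming $V \in H^1$, I would show $\tilde V \in H^1$, and then exchange the roles of the two potentials to obtain the converse.

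First I would apply Proposition \ref{prophttr} to each of $V$ and $\tilde V$, obtaining the two asymptotic expansions with remainder $o(t^{2-n/2})$ and three explicit terms apiece. Because the two heat traces coincide as functions of $t$, uniqueness of asymptotic expansions (the exponents $-n/2,\,1-n/2,\,2-n/2$ being distinct, and none being $o(t^{2-n/2})$) forces the three leading coefficients to agree. The $\Vol(M)$ and $a_{0,2}$ contributions already agree, as they are determined by $(M,g)$ alone; matching the remaining two coefficients yields $\int_M V\,d\mu = \int_M \tilde V\,d\mu$ and $\tfrac12\|V\|_{L^2}^2 - \tfrac16\int_M K V\,d\mu = \tfrac12\|\tilde V\|_{L^2}^2 - \tfrac16\int_M K \tilde V\,d\mu$. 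Consequently the two displayed three-term expansions are literally identical.

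Next, since $V \in H^1$, Theorem \ref{thm3.2} upgrades the remainder for $\Delta_V$ from $o(t^{2-n/2})$ to $O(t^{3-n/2})$. Combining this with the identity $\tr e^{-t\Delta_{\tilde V}} = \tr e^{-t\Delta_V}$ and the coincidence of the leading coefficients established above, I conclude that $\tr e^{-t\Delta_{\tilde V}}$ agrees with its own three-term expansion, written in terms of $\tilde V$, up to an error of order $O(t^{3-n/2})$. This is exactly the expansion appearing in the statement of Theorem \ref{thm3.2}, so its converse direction yields $\tilde V \in H^1$; reversing the roles of $V$ and $\tilde V$ then gives the full equivalence. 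I do not expect a genuine analytic obstacle, as all of the hard work resides in Theorem \ref{thm3.2}. The one point requiring care is that the converse direction of that theorem is phrased for the expansion whose $t^{1-n/2}$ and $t^{2-n/2}$ coefficients are built from $\tilde V$ itself; to invoke it legitimately I must first verify that replacing the $V$-coefficients by the $\tilde V$-coefficients changes nothing, which is precisely why the preliminary matching of coefficients via Proposition \ref{prophttr} is essential rather than merely cosmetic.
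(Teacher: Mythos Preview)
Your proposal is correct and follows essentially the same route as the paper: isospectrality forces equality of heat traces, and then Theorem~\ref{thm3.2} transfers the $O(t^{3-n/2})$ remainder from one potential to the other. The paper's proof is terser---it simply asserts that the two remainders must agree---whereas you make explicit the intermediate step of matching the three leading coefficients via Proposition~\ref{prophttr}, which is indeed what justifies identifying the remainders as equal functions of $t$.
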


In the case that $f$ is smooth, we compute in Section \ref{S4} the full small time asymptotic expansion expansion of the heat trace associated to $\Delta_f$, $H_f(t,x,x)$.
\begin{thm} \label{thmhtr}
For a smooth weight function $f$, $H_f(t,x,x)$ has the short time asymptotic expansion
\begin{equation}  \label{htr1}
H_f(t,x,x) \sim (4 \pi t)^{-n/2} e^{f(x)} \,\sum_{i=0}^\infty u_i(x,x) \, t^i
\end{equation}
where the $u_i$ are defined by \eref{u_i}.

If $\{ \lambda_i\}$ is the spectrum of the drifting Laplacian $\Delta_f$, then
\begin{equation} \label{htr2}
\sum_k e^{\lambda_k t} \sim (4 \pi t)^{-n/2} \sum_{i=0}^\infty a_i \, t^i
\end{equation}
where
\[
a_i = \int_M u_i(x,x) \, d\mu(x).
\]
We emphasize that the integral on the right side is taken with respect to the Riemannian volume form $d\mu$.
\end{thm}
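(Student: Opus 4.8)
The plan is to remove the drift by a unitary conjugation and thereby reduce everything to the classical heat expansion for a Schr\"odinger operator with smooth potential. Write $H_f$ for the heat kernel of $e^{-t\Delta_f}$ with respect to the weighted measure $d\mu_f=e^{-f}\,d\mu$, so that it is symmetric and $\tr e^{-t\Delta_f}=\int_M H_f(t,x,x)\,d\mu_f(x)$. The map $U\colon L^2(d\mu_f)\to L^2(d\mu)$, $Uv=e^{-f/2}v$, is unitary, and using the divergence form $\Delta_f u=-e^{f}\,\mathrm{div}(e^{-f}\nabla u)$ a direct computation gives
\begin{equation*}
U\,\Delta_f\,U^{-1}=\Delta+V_f,\qquad V_f:=\tfrac14\,|\nabla f|^2+\tfrac12\,\Delta f .
\end{equation*}
For smooth $f$ the potential $V_f$ is smooth, so $\Delta_f$ is isospectral to the Schr\"odinger operator $\Delta_{V_f}$, and the two heat kernels are related by $H_f(t,x,y)=e^{(f(x)+f(y))/2}\,p_{V_f}(t,x,y)$, where $p_{V_f}$ denotes the heat kernel of $\Delta_{V_f}$ with respect to $d\mu$. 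In particular $e^{-t\Delta_f}$ is trace class because $e^{-t\Delta_{V_f}}$ is.

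Next I would invoke the classical Minakshisundaram--Pleijel--Seeley expansion for the smooth-potential operator $\Delta_{V_f}$ on the compact manifold $M$, which yields a full on-diagonal expansion $p_{V_f}(t,x,x)\sim(4\pi t)^{-n/2}\sum_{i\ge0}u_i(x,x)\,t^i$ with smooth coefficients and normalization $u_0(x,x)=1$; these $u_i$ are precisely the coefficients produced by the transport recursion \eref{u_i}, as one checks by comparing the two recursions (equivalently, by factoring the symmetric prefactor $e^{(f(x)+f(y))/2}$ out of the parametrix ansatz for $\Delta_f$, which reduces the transport equations to those for the potential $V_f$). Multiplying by the conjugation factor and restricting to the diagonal then gives
\begin{equation*}
H_f(t,x,x)=e^{f(x)}\,p_{V_f}(t,x,x)\sim(4\pi t)^{-n/2}\,e^{f(x)}\sum_{i\ge0}u_i(x,x)\,t^i,
\end{equation*}
which is exactly \eref{htr1}.

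For the trace I would integrate the diagonal against $d\mu_f$. Here the whole point is a cancellation: since $d\mu_f=e^{-f}\,d\mu$, the prefactor $e^{f(x)}$ in $H_f(t,x,x)$ cancels the weight $e^{-f(x)}$, so that
\begin{equation*}
\tr e^{-t\Delta_f}=\int_M H_f(t,x,x)\,d\mu_f(x)=\int_M p_{V_f}(t,x,x)\,d\mu(x)\sim(4\pi t)^{-n/2}\sum_{i\ge0}\Big(\int_M u_i(x,x)\,d\mu(x)\Big)t^i .
\end{equation*}
Termwise integration is legitimate because the parametrix error, truncated at order $N$, is $O(t^{N+1-n/2})$ uniformly in $x$. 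This establishes \eref{htr2} with $a_i=\int_M u_i(x,x)\,d\mu(x)$ taken against the \emph{Riemannian} volume form; the cancellation of $e^{\pm f}$ is precisely why the weighted volume does not appear and why the leading term reproduces the Weyl law of Corollary \ref{th-weyl}.

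Since the reduction is to a classical situation, I do not expect a deep analytic obstacle: the genuine work is the uniform-in-$x$ remainder estimate that upgrades the formal parametrix to a true asymptotic expansion which may be integrated termwise, and through the conjugation this is just the standard smooth-potential Schr\"odinger case (Levi iteration / Duhamel). The real subtlety is bookkeeping --- tracking which measure the kernel is normalized against, isolating the $e^{f(x)}$ prefactor with the convention $u_0(x,x)=1$, and verifying the $e^{\pm f}$ cancellation in the trace. As a consistency check one computes the $i=1$ coefficient $a_1=\int_M\big(\tfrac16K-V_f\big)\,d\mu=\int_M\big(\tfrac16K-\tfrac14|\nabla f|^2\big)\,d\mu$, using $\int_M\Delta f\,d\mu=0$, which should agree with Corollary \ref{httr_drift}.
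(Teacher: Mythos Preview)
Your argument is correct, and at the level of mathematics it coincides with the paper's: the same transport recursion for the $u_i$, the same $e^{(f(x)+f(y))/2}$ prefactor, and the same $e^{\pm f}$ cancellation when one passes to the trace. The organizational difference is that the paper builds the parametrix for $\Delta_f$ \emph{directly}, inserting the factor $e^{\frac12(f(x)+f(y))}$ into the ansatz $S_k=G\,e^{\frac12(f(x)+f(y))}\sum_i u_i t^i$ from the outset and then deriving the transport equations by applying $\partial_t+\Delta_{f,y}$; the resulting recursion is exactly \eref{u_i}, which---as the paper itself observes in a closing remark---is precisely the Minakshisundaram--Pleijel recursion for the Schr\"odinger operator $\Delta+V_f$ with $V_f=\tfrac12\Delta f+\tfrac14|\nabla f|^2$. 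You instead conjugate first, invoke the classical smooth-potential expansion as a black box, and then translate back via $H_f=e^{(f(x)+f(y))/2}p_{V_f}$. Your route is a bit shorter and makes the identification of the $u_i$ with the Schr\"odinger coefficients explicit from the start; the paper's route is self-contained and makes the role of the prefactor in the parametrix ansatz visible without appealing to an external result. Either way, the uniform $O(t^{k+1-n/2})$ remainder (Levi iteration) is what justifies the termwise integration, and your consistency check for $a_1$ matches the paper's computation.
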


Finally, we give isospectrality results for the drifting Laplacian on compact manifolds.
\begin{thm} \label{isosp}
Consider two weighted  manifold $(M^n, g_M, d\mu_{f_1} )$ and $(N^m, g_N, d\mu_{f_2})$, with $f_i$ as in Corollary \ref{httr_drift} on the respective manifolds. Suppose that the drifting Laplacian $\Delta_{f_1}$ on $M$ and the drifting Laplacian $\Delta_{f_2}$ on $N$  are isospectral. Then the two manifolds must have the same dimension and the same Riemannian volume.

If in addition $M$ and $N$ are orientable surfaces, and the two weight functions have equal Dirichlet norms, $\int_M  \, |\n f_1 |^2  \, d\mu_M = \int_N  \, |\n f_2 |^2  \, d\mu_N,$ then they are diffeomorphic.

If $(M^n, g_M)$ is of dimension $n \leq 3$, then the set of isospectral drifting Laplacians, $\Delta_f$, with smooth $f$, is compact in $C^\infty (M)$.
\end{thm}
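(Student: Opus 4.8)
The plan is to read everything off the heat-trace invariants $a_i=\int_M u_i(x,x)\,d\mu$ of Theorem~\ref{thmhtr}, which are determined by the spectrum and hence agree for isospectral drifting Laplacians. For the first assertion I compare the two expansions \eqref{htr2}. The leading term is $(4\pi t)^{-n/2}a_0$ with $a_0=\int_M u_0(x,x)\,d\mu=\Vol(M)$, since $u_0\equiv 1$. Matching the exponent of the leading power of $t$ as $t\downarrow 0$ forces $\dim M=\dim N$, and then matching the leading coefficients gives $\Vol(M)=\Vol(N)$ for the Riemannian volumes. This follows immediately from Theorem~\ref{thmhtr} and needs no further work.

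For the surface case I first pass to a Schr\"odinger operator. Conjugating by the unitary $U\colon L^2(d\mu_f)\to L^2(d\mu)$, $Uu=e^{-f/2}u$, a direct computation gives $U\Delta_f U^{-1}=\Delta+V_f$ with
\[
V_f=\tfrac14|\nabla f|^2+\tfrac12\Delta f,
\]
so $\Delta_f$ is isospectral to $\Delta+V_f$ and Proposition~\ref{prophttr} applies. Since $\int_M\Delta f\,d\mu=0$ on a closed manifold, $\int_M V_f\,d\mu=\tfrac14\int_M|\nabla f|^2\,d\mu$, whence the subleading invariant is
\[
a_1=\tfrac16\int_M K\,d\mu-\tfrac14\int_M|\nabla f|^2\,d\mu.
\]
Equating $a_1$ for the two isospectral operators and invoking the hypothesis that the Dirichlet norms coincide yields $\int_M K_M\,d\mu_M=\int_N K_N\,d\mu_N$. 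In dimension two the scalar curvature is twice the Gaussian curvature, so Gauss--Bonnet converts this into $\chi(M)=\chi(N)$; as $M$ and $N$ are closed orientable surfaces, equal Euler characteristic means equal genus, hence $M$ and $N$ are diffeomorphic.

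For the compactness statement I bound every Sobolev norm of $f$ uniformly over the isospectral set. The invariant $a_k$ of $\Delta+V_f$ is the integral of a universal polynomial in the curvature of $(M,g)$ and in $V_f$ and its covariant derivatives; its highest-derivative quadratic part is $c_k\int_M|\nabla^{k-2}V_f|^2\,d\mu$ with $c_k\neq 0$, while every remaining contribution is either curvature, of lower order in the derivatives of $V_f$, or at least cubic in $V_f$. The top-order part of $V_f$ is $\tfrac12\Delta f$, so this quadratic term controls $\|f\|_{\dot H^k}^2$ modulo lower order. Arguing inductively in $k$, starting from $a_0=\Vol(M)$ and a normalization such as $\int_M f\,d\mu=0$, I use the already-established bounds on $\|f\|_{H^{k-1}}$ to absorb all curvature, lower-order, and nonlinear terms, and thereby bound $\|f\|_{H^k}$ by a constant depending only on the fixed spectral data. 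Because all the $a_k$ are spectral invariants these bounds are uniform over the family; Rellich's theorem then gives precompactness in $C^\infty(M)$, and closedness of the family under $C^\infty$ limits upgrades this to compactness.

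The main obstacle is the induction in the third part, specifically the nonlinear term $\tfrac14|\nabla f|^2$ in $V_f$. Absorbing $\||\nabla f|^2\|_{\dot H^{k-2}}$ into the inductive bounds requires a multiplication estimate of the form $\||\nabla f|^2\|_{H^{k-2}}\lesssim\|f\|_{H^{k-1}}\,\|\nabla f\|_{L^\infty}$, which closes only when $\nabla f\in H^2$ embeds into $L^\infty$, i.e.\ when $n\le 3$; this is precisely the source of the dimensional restriction, and checking that the cubic-and-higher terms are likewise controlled in this range is the technical heart of the argument. A secondary point is the normalization: $\Delta_f$ determines $f$ only up to an additive constant, so one fixes it, equivalently working with the positive ground state $w=e^{-f/2}$ of $\Delta+V_f$ and recovering $f=-2\log w$ by elliptic regularity, after verifying a uniform positive lower bound on $w$ to control $\log w$.
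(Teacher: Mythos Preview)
Your arguments for the first two assertions are essentially identical to the paper's: compare the leading terms of the heat-trace expansion to read off dimension and Riemannian volume, then use the $a_1$ coefficient together with the equal-Dirichlet-norm hypothesis and Gauss--Bonnet to equate Euler characteristics and conclude diffeomorphism. (A minor quibble: you invoke Theorem~\ref{thmhtr} for the first part, which assumes smooth $f$, whereas the hypothesis is only that of Corollary~\ref{httr_drift}; but the two coefficients you need are already supplied by Proposition~\ref{prophttr}/Corollary~\ref{httr_drift}, so this is cosmetic.)

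For the compactness statement the approaches diverge. The paper does not argue at all: it simply invokes the unitary equivalence $\Delta_f\cong\Delta+V_f$ and cites Br\"uning's theorem on $C^\infty$-compactness of isospectral potentials for Schr\"odinger operators in dimension $n\le 3$. You instead sketch a direct proof, effectively reproducing Br\"uning's strategy adapted to the weight function: use the structure of the heat invariants $a_k$ (whose top-order quadratic part is $c_k\int|\nabla^{k-2}V|^2$) to bound the $H^k$ norms of $f$ inductively, with the multiplication estimate for $|\nabla f|^2$ supplying the $n\le 3$ restriction, and then handle the additive-constant ambiguity via the ground state $e^{-f/2}$. Your outline is in the right spirit and correctly identifies where the dimensional constraint enters, but as you yourself flag, it is only a sketch: starting the induction (getting the first $L^\infty$ bound on $\nabla f$), the precise bookkeeping of the lower-order and cubic-and-higher contributions in the $a_k$, and the uniform positive lower bound on the ground state all require real work that you do not carry out. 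The paper's route is far shorter because it outsources exactly this analysis to Br\"uning; your route is more self-contained and explains the role of $n\le 3$, but would need substantial detail to be a proof rather than a plan.
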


\section{Heat trace expansions}\label{s-heat}

\subsection{Heat trace for a Schr\"odinger operator with an $L^\infty$ potentials}
In this subsection will prove Proposition  \ref{prophttr}. Recall that $(M, g)$ be a smooth, compact, Riemannian manifold of dimension $n$ and $\Delta_V = \Delta + V$ is a Schr\"odinger operator with $V \in L^\infty (M)$.

\begin{proof}[Proof of Proposition \ref{prophttr}]
The estimates of \cite{smith-z} \S 3 p. 465, show that the semigroup $e^{-t\Delta_V}$ is trace class, but since their results are over $\R^n$, we recall the required calculations and estimates in the setting of a compact Riemannian manifold. Using Duhamel's principle, one can generate an expression for $e^{-t\Delta_V} - e^{-t\Delta}$ which is given by
$$e^{-t\Delta_V} - e^{-t\Delta} = \sum_{k \geq 1} W_k(t),$$
with
\begin{equation} \label{W_kdef}
W_k (t) = (-1)^k \int_{0<s_1 < \ldots < s_k < t} e^{-(t-s_k) \Delta} V e^{-(s_k - s_{k-1}) \Delta} V \ldots V e^{-s_1 \Delta} ds_1 \ldots ds_k.
\end{equation}
We first observe that we have the $L^2$ operator norm bound
$$\|W_k (t)\|_{L^2 \to L^2} \leq \|V\|_{\infty} ^k t^k/k!,$$
since the integrand is $L^2$ bounded by $\|V\|_\infty ^k$, and the volume of integration is $t^k/k!$.  Then, we also have a bound on the trace class norm,
\begin{equation} \label{W_ke1}
\|W_k(t)\|_{L^1} \leq C^k k^{n/2} t^{k-n/2} /k!.
\end{equation}
This follows immediately by the same arguments given on p. 365 of \cite{smith-z}. In fact, the proof can be simplified in our case: since we are working on a compact manifold, there is no need to use a cut-off function $\chi$, and we can obtain the estimate directly by applying the operator $e^{-s \Delta}$. 

Consequently, the operator $e^{-t\Delta_V} - e^{-t\Delta}$ is trace class, and the trace may be exchanged with summation to get
$$\tr(e^{-t\Delta_V} - e^{-t\Delta}) = \sum_{k \geq 1} \tr (W_k (t)).$$
We will compute the trace of the first term, $W_1$.  Let $H_0 (t,x,y)$ denote the heat kernel of the semigroup
$e^{-t\Delta}$. The Schwartz kernel of $W_1$ is given by
$$W_1 (t, x, y) = - \int_{0} ^t \int_M H_0 (s,x,z) H_0 (t-s, z, y) V(z) \;dz ds, $$
where to simplify notation, we have used $dz$ to indicate $d\mu(z)$. To compute its trace, we set $y=x$ and integrate,
\begin{equation} \label{eqn-trwk0}
\tr W_1 (t) = -\int_0 ^t \int_M \int_M H_0 (s,x,z) H_0 (t-s,z,x) V(z) \; dz dx ds
\end{equation}
Since we are on a compact manifold, the Laplacian has an $L^2$ orthonormal basis of eigenfunctions $\{ \phi_k\}$ with corresponding eigenvalues $\lambda_k$. Its heat kernel then has a simple expression with respect to this basis
\begin{equation} \label{H0_onb}
H_0 (s,x,z) = \sum_{k \geq 1} e^{-\lambda_k t} \phi_k (x) \phi_k (z).
\end{equation}
Expressing the heat kernels in \eref{eqn-trwk0} in terms of this orthonormal basis and using the orthonormality of the eigenfunctions, we compute the integral with respect to $x$ to arrive at
\begin{equation}  \label{W_1e1}
\tr W_1 (t) = - \int_0 ^t \int_M H_0(t, z, z) V(z) dz ds = -t \int_M H_0 (t,z,z) V(z) \;dz.
\end{equation}

Next, we use the fact that the heat kernel has a local expansion along the diagonal as $t\downarrow 0$
$$H_0(t,z,z) = \frac{1}{(4\pi t)^{n/2}} + \frac{t K(z)}{6(4\pi t)^{n/2}} + r(t) \quad \text{where}  \quad r(t) = O(t^{2-n/2}),$$
and $K(z)$ is the scalar curvature of $(M,g)$ (see for example \cite{Ro}*{Chapter 3}).
Since $V \in L^\infty$ and $M$ is compact, it then follows that as  $t \downarrow 0$
\begin{equation} \label{trw1}
\tr W_1 (t) = -t (4\pi t)^{-n/2} \left[ \int_M V(z) dz + \frac{t}{6} \int_M K(z) V(z) dz\right] +  O(t^{3-n/2}).
\end{equation}

In fact, we can also compute the full asymptotic expansion of the term $\tr W_1 (t)$ as $t \downarrow 0$. Recalling the identity \eref{W_1e1} for $W_1$ and applying the local heat trace expansion of $H_0$ together with the assumption that $V \in L^\infty$, we obtain
\begin{equation*}
\tr W_1 (t) \sim -t (4\pi t)^{-n/2} \sum_{j \geq 0} t^j \int_M u_{0,j} (z,z) V(z) dz.
\end{equation*}
The functions $u_{0,j}(z,z)$ are the local heat invariants of the heat kernel $H_0$ on the Riemannian manifold $(M, g)$. Consequently, the $u_{0,j}$ are {\it  independent of }  $V$.  It is in general very difficult to express these functions in terms of geometric and topological invariants of the manifold. We refer the interested reader to Rosenberg's results in \cite{Ro}, and perhaps more conveniently to equation \eref{u0_i} for an integral expression of the $u_{0,j}$. In the case $j=1$, as we have mentioned above, $u_{0,1}(z,z)=K(z)/6$ where $K(z)$ is the scalar curvature of the manifold  \cite{Ro}. We would also like to remark that in the expansion of the trace of $W_1$ only $V$ appears and none of its higher order derivatives.

To complete the proof, we continue to analyze the next term in the expansion.
By definition \eref{W_kdef}, the trace of $W_2$ is
$$\tr W_2 (t) = \int_{M^3} \int_{0<r<s<t} H_0(t-s,x,y) H_0(s-r,y,z) H_0(r,z,x) V(y) V(z) \;dr ds dx dy dz.$$
We again use the orthonormal basis expansion \eref{H0_onb} to express $H_0(t-s,x,y)$ and $H_0(r,z,x)$, and integrate with respect to $x$ to obtain
$$\tr W_2 (t) = \int_{M^2} \int_{0<r<s<t} H_0(t-s+r,y,z) H_0(s-r,z,y) V(y) V(z) \; dr ds dy dz.$$

We can now apply a clever time-substitution technique to further simplify this expression (see for example \cite{smith-z}*{pp. 467--468}).  Letting $u=t-s$ and suppressing the integrals over space we get
$$ \int_{r+u<t, \; 0<r,u} H_0(u+r,y, z) H_0(t-u-r,y, z) V(y) V(z) \; dr du.$$
Setting $r=tv-u$, and observing that $dr du = t dv du$,  we have
\begin{equation*}
\begin{split}
\int_{v=0}^1 & \int_{0<u<tv}  H_0 (tv, y, z) H_0 (t(1-v), y, z) V(y) V(z) t \; du dv\\
&= t^2 \int_{v=0}^1 H_0 (tv, y, z) H_0 (t(1-v), y, z) V(y) V(z) v \; dv,
\end{split}
\end{equation*}
where for the last integration we have used that the integrand is independent of $u$. Moreover, the symmetry of the last integral with respect to the map $v \mapsto 1-v$, further simplifies it to
\begin{equation*}
\frac{t^2}{2} \int_0 ^1 H_0 (tv, y, z) H_0 (t(1-v), y, z) V(y) V(z) \;dv.
\end{equation*}

Let $\cN_\eps$ be a small $\eps$-neighborhood of the diagonal,  in $M \times M$.
By the rapid off-diagonal decay and the short time local asymptotic expansion for the two heat kernels $H_0 (tv, y, z)$ and $H_0 (t(1-v), y, z)$ (see \S 3 of \cite{Ro}),
\begin{equation*}
\begin{split}
\tr W_2 (t) = \frac{t^2}{2} \, (4\pi t)^{-n} \int_{\cN_\epsilon} \int_{v=0} ^1 &(1-v)^{-n/2} v^{-n/2} e^{-d(y,z)^2/(4tv(1-v))} \\
& \cdot u_0 (y, z)^2 V(y) V(z) \;dv dy dz  + O(t^{3-n/2}) + O_\eps(t^\infty)
\end{split}
\end{equation*}
where $O_\eps(t^\infty)$ depends on $\eps$.

We now compare the short time asymptotic expansion of the product
\begin{equation*}
\begin{split}
H_0 &(tv, y, z) H_0 (t - tv, y, z) \\
&= \left( (4\pi t)^{-n/2} \right)^2 (v(1-v))^{-n/2} e^{-d(y,z)^2/(4tv(1-v))} u_0(y,z)^2 + O(t^{1-n/2}),
\end{split}
\end{equation*}
to
$$H_0(tv(1-v), y, z) =  (4\pi t)^{-n/2} (v(1-v))^{-n/2} e^{-d(y,z)^2/(4tv(1-v))} u_0 (y,z) + O(t^{1-n/2}).$$
Due to the $O(t^\infty)$ off-diagonal decay it suffices to compare $u_0 (y,z)$ and $u_0 (y,z)^2$ for $y \approx z$, recalling that $u_0(z,z)=1$. Since $u_0$ is smooth and $M$ is compact, for any $1>\eps > 0$, there exists $\delta > 0$ such that
$$d(y,z) < \delta \implies |u_0 (y,z) - u_0 (z,z)| = |u_0 (y,z) - 1| < \frac{\eps}{2} \implies |u_0(y,z)| < 1+ \frac{\eps }{2},$$
and therefore
\begin{equation} \label{u0est}
|u_0 (y,z)^2 - u_0 (y,z)| = |u_0 (y,z)| | u_0 (y,z) - 1|  < \left( 1+ \frac{\eps}{2} \right) \frac{ \eps  }{2} < \eps.
\end{equation}

By the heat kernel expansion,
\begin{equation*}
\begin{split}
|H_0 &(tv, y, z) H_0 (t - tv, y, z) - (4\pi t ) ^{-n/2} H_0 (tv(1-v),y,z) |\\
&= (4\pi t)^{-n} (v(1-v))^{-n/2} e^{-d(y,z)^2/(4tv(1-v))} |u_0(y,z)|\; |u_0 (y,z) - 1| +O(t^{1-n/2}).
\end{split}
\end{equation*}
We also know that for $y \neq z$, the right side is rapidly decaying as $t \downarrow 0$.  Moreover, for any $t>0$, choosing a small enough $\eps$-neighborhood $\cN_\eps$ (depending on $t$) we can make the right side above as small as desired by \eref{u0est}.  Consequently, we get the appoximation
\begin{equation} \label{W_2e5}
\begin{split}
\tr W_2 (t) = \frac{t^2}{2} (4\pi t)^{-n/2} \int_0 ^1 \int_{M^2} & H_0 (tv(1-v), y, z) V(y) V(z) \;dy dz dv \\
&+ O(t^{3-n/2})
\end{split}
\end{equation}

Note that
$$ \int_{y \in M} H_0 (tv(1-v), y, z) V(y) \;dy = U_V (tv(1-v), z),$$
is the solution to the heat equation with initial data $V$ at time $tv(1-v)$.  By \eref{W_2e5} we have
\begin{equation} \label{W_2e6}
\left|  2 (4\pi t)^{n/2} t^{-2} \tr W_2 (t) - \int_0 ^1 \int_M  U_V (tv(1-v), z) V(z) \; dz dv \right| = O(t).
\end{equation}

We recall that the solution to the heat equation with initial data $V$ at time $tv(1-v)$
converges to $V$ as $t\downarrow 0$, uniformly in $L^2 (M)$ for all $v \in (0,1)$. In other words,
$$\| U_V (tv(1-v), z) - V(z)\|_{L^2} \to 0 \textrm{ uniformly as $t\downarrow 0$ for all $v\in (0,1)$.}$$
and as a result,
\begin{equation*}
\begin{split}
&\left| \int_0 ^1  \int_M U_V (tv(1-v), z) V(z) dv dz - \int_{0} ^1 \int_{M}   |V(z)|^2 \;dz \; vdv \right|  \\
& \qquad \qquad \leq \left( \|V\|_{L^2(M)} \sup_{v \in [0,1]} \| U_V (tv(1-v)) - V\|_{L^2 (M)}  \right) \to 0 \textrm{ uniformly as } t \downarrow 0.
\end{split}
\end{equation*}
However, we do not know a priori the {\it  rate} at which the right side converges as $t \downarrow 0$. Combining this estimate with \eref{W_2e6} we therefore obtain
\begin{equation} \label{W_2e1}
\tr W_2 (t) = \frac{t^2}{2}  (4\pi t)^{-n/2} \left[ \|V\|_{L^2} ^2  + o(1) \right].
\end{equation}

To complete the proof, we recall the bound in  \eref{W_ke1}, which shows that
\begin{equation} \label{eqn-trwk} \tr \sum_{k\geq 3} W_k (t) \leq C t^{3-n/2}, \quad 0<t\leq 1. \end{equation}
As a result, under the very general assumption that $V \in L^\infty$, we have a precise expression for the heat trace up to order $t^{2-n/2}$ and a remainder estimate,
\begin{equation*}
\begin{split}
\tr e^{-t \Delta_V} = & \frac{\Vol (M)}{(4\pi t)^{n/2}} + \frac{t}{(4\pi t)^{n/2}} \int_M \left( \frac 13 \, K(z)   - V(z)  \right) \;d\mu (z) \\
&   + \frac{t^2}{(4\pi t)^{n/2}}  \left[ a_{0,2} + \frac{1}{2} \|V\|_{L^2} ^2 - \frac{1}{6} \int_M  K(z) V(z)  \;d\mu (z)  \right] + o(t^{2-n/2}),
\end{split}
\end{equation*}
as $t \downarrow 0.$
\end{proof}

We note that the  first, second, and fourth terms come from the corresponding  terms in the short time heat trace expansion of $e^{-t\Delta}$ on $M$.  Moreover, the potential $V$ does not appear in the leading order asymptotics; it first makes an appearance in the coefficient of  $t^{1-n/2}$.  This is the crux of the matter and the reason why $V$ does not affect Weyl's law for a Schr\"odinger operator.  However, based on the work of Hassanezhad \cite{asma1} and Wu and Wu \cite{wuwu}, one {\it does } expect that more refined estimates of the eigenvalues of a Schr\"odinger operator may indeed depend on the potential.

\subsection{Weyl's law via the heat trace and Karamata Lemma}  \label{sskw}

In general, there are two main techniques which can be used to prove Weyl's law.  The first technique is known as {\it  Dirichlet-Neumann bracketing},  and this is one used in Weyl's classical proof \cite{weyl1}.  The second technique uses an element of the functional calculus of the Laplacian, such as the resolvent, wave group, or heat semi-group, together with a suitable Tauberian theorem.  Here, we use the short time asymptotic behavior of the heat trace together with Karamata's Tauberian Lemma to prove Corollary \ref
{th-weyl}.

\begin{lem}[Karamata] \label{klemma} Assume $d\nu$ is a non-negative measure on $(0, \infty)$, and $\alpha \geq 1$.  Assume that
$$\int_0 ^\infty e^{-t\lambda} d\nu(\lambda) < \infty \quad \forall t > 0$$
and
$$\lim_{t \downarrow 0} t^\alpha \int_0 ^\infty e^{-t\lambda} d\nu(\lambda) = c \in (0, \infty).$$
Then for all continuous functions $g$ on $[0,1]$ we have
$$\lim_{t \downarrow 0} t^\alpha \int_0 ^\infty g(e^{-t\lambda}) e^{-t\lambda} d\nu (\lambda) = \frac{c}{\Gamma(\alpha)} \int_0 ^\infty g(e^{-t}) t^{\alpha - 1} e^{-t} dt.$$
\end{lem}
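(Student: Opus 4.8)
The plan is to prove the identity first for monomials $g(x) = x^k$, extend by linearity to all polynomials, and then pass to general continuous $g$ by Weierstrass approximation, using the positivity of $d\nu$ to control the approximation error uniformly in $t$.

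First I would treat $g(x) = x^k$ for an integer $k \geq 0$. Then $g(e^{-t\lambda})\, e^{-t\lambda} = e^{-(k+1)t\lambda}$, so the left-hand side becomes $t^\alpha \int_0^\infty e^{-(k+1)t\lambda}\,d\nu(\lambda)$. Substituting $s = (k+1)t$ turns this into $(k+1)^{-\alpha} s^\alpha \int_0^\infty e^{-s\lambda}\,d\nu(\lambda)$, which by hypothesis tends to $c/(k+1)^\alpha$ as $t \downarrow 0$. On the right-hand side, since $g(e^{-t})\, e^{-t} = e^{-(k+1)t}$, the standard Gamma integral $\int_0^\infty t^{\alpha-1} e^{-(k+1)t}\,dt = \Gamma(\alpha)/(k+1)^\alpha$ yields exactly $c/(k+1)^\alpha$. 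Thus the two sides agree for every monomial, and by linearity the conclusion holds for every polynomial $g$.

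For general continuous $g$, I would introduce the positive functional
\[
L_t(g) := t^\alpha \int_0^\infty g(e^{-t\lambda})\, e^{-t\lambda}\,d\nu(\lambda),
\]
which is integration of $g$ against the finite positive measure obtained by pushing $t^\alpha e^{-t\lambda}\,d\nu(\lambda)$ forward along $\lambda \mapsto e^{-t\lambda} \in (0,1]$. Its total mass $L_t(1) = t^\alpha \int_0^\infty e^{-t\lambda}\,d\nu(\lambda)$ converges to $c$, and is therefore bounded, say by $c+1$, for all sufficiently small $t$. Given $\eps > 0$, I would choose by the Weierstrass theorem a polynomial $p$ with $\sup_{[0,1]}|g - p| < \eps$. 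Since $d\nu \geq 0$, the measure defining $L_t$ is positive, so $|L_t(g) - L_t(p)| \leq \eps\, L_t(1) \leq \eps(c+1)$ for small $t$; the analogous bound holds for the limiting functional $L(g) := \frac{c}{\Gamma(\alpha)}\int_0^\infty g(e^{-t})\, e^{-t}\, t^{\alpha-1}\,dt$, whose underlying measure also has total mass $c$. Combining these with the already-established convergence $L_t(p) \to L(p)$ gives $\limsup_{t\downarrow 0}|L_t(g) - L(g)| \leq \eps(2c+1)$; letting $\eps \downarrow 0$ yields $L_t(g) \to L(g)$, which is the claim.

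The computation for monomials is routine, so the main obstacle is the final approximation step: one must transfer convergence from the dense subspace of polynomials to all of $C([0,1])$. This hinges on two features supplied by the hypotheses — the positivity of $d\nu$, which makes $L_t$ a positive (hence monotone) functional so that approximation errors are controlled by $L_t(1)$, and the existence of the finite limit $c$, which guarantees the uniform boundedness of the masses $L_t(1)$ needed for the estimate to hold uniformly in $t$.
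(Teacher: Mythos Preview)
Your proof is correct and is in fact the classical argument for Karamata's Tauberian Lemma: verify the claim for monomials via the rescaling $s=(k+1)t$, extend to polynomials by linearity, and then pass to continuous $g$ by Weierstrass approximation using the uniform boundedness of the total masses $L_t(1)$ and the positivity of the measures.

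The paper, however, does not prove this statement at all: Lemma~\ref{klemma} is simply stated as a known result. What the paper does prove is the slight generalization in Lemma~\ref{klemma2} (piecewise continuous $g$), and that proof \emph{uses} Lemma~\ref{klemma} as a black box, sandwiching $g$ between monotone sequences of continuous approximants and invoking the continuous case for each $g_n$. So your argument supplies precisely the ingredient the paper takes for granted; it is complementary to, rather than comparable with, anything in the text.
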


The relationship between the short time asymptotic behavior of the heat kernel together with the large $\Lambda$ asymptotic behavior of the eigenvalue counting function via Karamata's Lemma is not completely obvious.  To  explicate  this, we recall that a natural way to relate the heat trace to the counting function is by introducing the spectral measure, $\nu$, with
$$d\nu(\lambda) = \sum_{k \geq 1} \delta(\lambda - \lambda_k).$$
With respect to this measure, the heat trace can be written as
$$\int_0 ^\infty e^{-t\lambda} d\nu(\lambda).$$
From this perspective, the existence of a one-term asymptotic expansion together with a remainder estimate for the heat trace as $t\downarrow 0$ is precisely the statement that there exists $c \in (0, \infty)$, and a constant $\alpha$  with
\begin{equation} \label{klc} \lim_{t \downarrow 0}t^\alpha \int_0 ^\infty e^{-t\lambda} d\nu (\lambda) = c. \end{equation}

Next, one would like to connect the above assumption to the counting function asymptotics as the spectral parameter tends to infinity.  To do this, consider a function, $g$, which is equal to $\frac{1}{x}$ on some compact subset of $[0, \infty)$ and suitably cut-off.  Then, one has
$$t^\alpha \int_0 ^\infty g(e^{-t\lambda}) e^{-t\lambda} d\nu(\lambda) =t^\alpha  \int_{\supp(g(e^{-t\lambda}))} d\nu(\lambda) = t^{\alpha} \# \{ \lambda_k \textrm{ with } \lambda_k \in \supp(g(e^{-t\lambda})) \}.$$
This calculation gives some hope that perhaps with a clever choice of the function $g$, one can use (\ref{klc}) and the  preceding equation to demonstrate Weyl's law.  It is then natural to investigate the existence of
$$\lim_{t \downarrow 0} t^\alpha \int_0 ^\infty g(e^{-t\lambda}) e^{-t\lambda} d\nu(\lambda),$$
for a suitably general class of functions $g$.  We hope that these insights demystify Karamata's seemingly magical result.

We shall apply the Karamata Lemma to a function which does not in fact satisfy the hypotheses of the Lemma.  For the sake of completeness, we shall therefore prove a slight generalization of Karamata's Lemma.

\begin{lem} \label{klemma2}
Let $g$ be a bounded, non-negative, piecewise continuous function on $[0,1]$.  Assume that $\nu$ is a non-negative measure, $\alpha \geq 1$, and
$$\int_0 ^\infty e^{-t \lambda} d\nu (\lambda) < \infty \, \forall t > 0, \quad \lim_{t \downarrow 0} t^\alpha \int_0 ^\infty e^{-t\lambda} d\nu(\lambda) = c \in (0, \infty).$$
Then, we have
$$\lim_{t \downarrow 0} t^\alpha \int_0 ^\infty g(e^{-t \lambda}) e^{-t \lambda} d\nu(\lambda) = \frac{c}{\Gamma(\alpha)} \int_0 ^\infty g(e^{-t}) e^{-t} t^{\alpha - 1} dt.$$
\end{lem}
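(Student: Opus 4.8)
The plan is to deduce Lemma \ref{klemma2} from the already-established continuous case, Lemma \ref{klemma}, by a sandwiching argument. (For completeness I note that Lemma \ref{klemma} itself is obtained by testing on monomials $g(x)=x^k$, where both sides reduce to $c\,(k+1)^{-\alpha}$ after the substitution $s=(k+1)t$, and then invoking the Weierstrass approximation theorem to pass from polynomials to all continuous $g$.) Since $g$ is piecewise continuous on $[0,1]$, it has only finitely many discontinuities $x_1,\dots,x_m\in[0,1]$, and it is bounded, say $0\le g\le M$. For each $\delta>0$ I would construct two \emph{continuous} functions $g_-^\delta,g_+^\delta$ on $[0,1]$ with $0\le g_-^\delta\le g\le g_+^\delta\le M$ and $g_-^\delta=g=g_+^\delta$ outside the $\delta$-neighborhoods $\bigcup_i(x_i-\delta,x_i+\delta)$; this is possible because $g$ is continuous away from the $x_i$, so one need only interpolate across each short bad interval, letting $g_-^\delta$ dip to the local infimum and $g_+^\delta$ rise to the local supremum of $g$ there.

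Since $d\nu\ge0$ and $e^{-t\lambda}\ge0$, the pointwise bound $g_-^\delta\le g\le g_+^\delta$ integrates to
$$t^\alpha\!\int_0^\infty g_-^\delta(e^{-t\lambda})e^{-t\lambda}\,d\nu(\lambda)\le t^\alpha\!\int_0^\infty g(e^{-t\lambda})e^{-t\lambda}\,d\nu(\lambda)\le t^\alpha\!\int_0^\infty g_+^\delta(e^{-t\lambda})e^{-t\lambda}\,d\nu(\lambda).$$
Denote the middle quantity by $I(t)$. As $g_\pm^\delta$ are continuous, Lemma \ref{klemma} applies to each, giving
$$\lim_{t\downarrow0}t^\alpha\!\int_0^\infty g_\pm^\delta(e^{-t\lambda})e^{-t\lambda}\,d\nu(\lambda)=\frac{c}{\Gamma(\alpha)}\int_0^\infty g_\pm^\delta(e^{-t})e^{-t}t^{\alpha-1}\,dt=:L_\pm^\delta.$$
Taking $\liminf$ and $\limsup$ through the sandwich yields $L_-^\delta\le\liminf_{t\downarrow0}I(t)\le\limsup_{t\downarrow0}I(t)\le L_+^\delta$ for every fixed $\delta>0$.

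Finally I would let $\delta\downarrow0$. Writing $L:=\frac{c}{\Gamma(\alpha)}\int_0^\infty g(e^{-t})e^{-t}t^{\alpha-1}\,dt$ for the target value, the bound $g_-^\delta\le g\le g_+^\delta$ also gives $L_-^\delta\le L\le L_+^\delta$, so it suffices to prove $L_+^\delta-L_-^\delta\to0$. This difference equals $\frac{c}{\Gamma(\alpha)}\int_0^\infty(g_+^\delta-g_-^\delta)(e^{-t})e^{-t}t^{\alpha-1}\,dt$, whose integrand is bounded by $M$ and supported on $\{t:e^{-t}\in\bigcup_i(x_i-\delta,x_i+\delta)\}$. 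The main (though mild) obstacle is verifying that this integral vanishes as $\delta\downarrow0$, and here the three possible locations of a discontinuity must be checked separately. Since $t\mapsto e^{-t}$ is a diffeomorphism of $(0,\infty)$ onto $(0,1)$, an interior point $x_i\in(0,1)$ pulls back to a $t$-interval of length $O(\delta)$ on which $e^{-t}t^{\alpha-1}$ is bounded; the endpoint $x_i=1$ pulls back to $t\in(0,-\log(1-\delta))$, where $\int_0^{C\delta}e^{-t}t^{\alpha-1}\,dt\to0$ precisely because the hypothesis $\alpha\ge1$ keeps $t^{\alpha-1}$ integrable at the origin; and $x_i=0$ pulls back to $\{t>-\log\delta\}$, where the tail of the convergent integral $\int_0^\infty e^{-t}t^{\alpha-1}\,dt=\Gamma(\alpha)$ tends to zero. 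In every case the absolute continuity of the weight $e^{-t}t^{\alpha-1}\,dt$ forces the contribution of the shrinking neighborhoods to zero, so $L_+^\delta-L_-^\delta\to0$. Combined with $L_-^\delta\le L\le L_+^\delta$, the squeeze then shows $\liminf_{t\downarrow0}I(t)=\limsup_{t\downarrow0}I(t)=L$, which is the assertion of the lemma.
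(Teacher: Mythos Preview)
Your argument is correct. The only cosmetic point is the construction of $g_\pm^\delta$: to ensure both continuity and the pointwise inequalities simultaneously, it is cleanest to use a partition-of-unity style interpolation, e.g.\ take a continuous cutoff $\phi$ which is $1$ on $[x_i-\delta,x_i+\delta]$ and $0$ outside $[x_i-2\delta,x_i+2\delta]$ and set $g_-^\delta=(1-\phi)g+\phi\,m_i$ with $m_i=\inf_{[x_i-2\delta,x_i+2\delta]}g$, and similarly for $g_+^\delta$. This makes the verifications $0\le g_-^\delta\le g\le g_+^\delta\le M$ immediate. Also note that $\int_0^{C\delta}e^{-t}t^{\alpha-1}\,dt\to 0$ holds for any $\alpha>0$, not just $\alpha\ge1$; the hypothesis $\alpha\ge 1$ is not actually what drives that step.

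Your route is different from the paper's. The paper approximates $g$ from \emph{below} only by a monotone sequence of continuous functions $g_n\uparrow g$, obtains $G(g)\le\liminf_{t\downarrow0}F_t(g)$ directly from $F_t(g_n)\le F_t(g)$, and then handles the $\limsup$ side by an interchange-of-limits argument using $F_t(g)=\sup_n F_t(g_n)$. You instead build a genuine two-sided sandwich $g_-^\delta\le g\le g_+^\delta$ exploiting the finite set of discontinuities, and then let $\delta\downarrow0$. Your version is more explicit and the squeeze is completely transparent once $L_+^\delta-L_-^\delta\to0$ is checked; the paper's version is more abstract and in principle applies to any bounded nonnegative $g$ admitting monotone continuous approximants, but the $\limsup$ step there relies on a somewhat delicate exchange of $\sup_n$ and $\limsup_{t}$. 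For the piecewise-continuous class at hand, your direct sandwich is arguably the cleaner of the two.
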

\begin{proof}
We begin by introducing the notations
\begin{equation}
F_t(g) = t^\alpha \int_0^\infty g\left(e^{-t\lambda}\right)e^{-t\lambda} \mathrm d\nu(\lambda),\quad G(g) = \frac{c}{\Gamma(\alpha)}\int_0^\infty g\left(e^{-t}\right)t^{\alpha-1} e^{-t}\mathrm dt.
\end{equation}

Let $\{g_n\}$ be a sequence of continuous functions such that
$$0 \leq g_1(x) \leq g_2(x) \leq \ldots \leq g_n (x) \leq g_{n+1} (x) \leq \ldots \leq g(x) \leq M, \quad \forall x \in [0,1],$$
and
$$\lim_{n \to \infty} g_n (x) = g(x) \quad \forall x \in [0,1].$$
Because $g$ is non-negative and bounded, we have the estimate
$$0 \leq G(g) \leq \frac{c}{\Gamma(\alpha)} \int_0 ^\infty M t^{\alpha-1} e^{-t} dt = cM.$$
This will allow us to apply dominated convergence arguments.

Since each $g_n$ is continuous,
$$\lim_{t \downarrow 0} F_t (g_n) = G(g_n).$$
Moreover, under these assumptions,
$$\lim_{n \to \infty} G(g_n) = G(g), \quad \lim_{n \to \infty} F_t (g_n) = F_t (g) \quad  \forall t > 0.$$
Therefore,
$$G(g_n) = \lim \inf_{t \downarrow 0} F_t (g_n) \leq \lim \inf_{t\downarrow 0} F_t (g), \quad \forall n,$$
so letting $n \to \infty$, we have
$$G(g) \leq \lim \inf_{t \downarrow 0} F_t (g).$$
On the other hand,
\begin{equation*}
\begin{split}
\lim \sup_{t \downarrow 0} F_t (g) &= \lim \sup_{t \downarrow 0} F_t (\lim_{n \to \infty} g_n) \leq \lim \sup_{t \downarrow 0} \lim \sup_{n \to \infty} F_t (g_n) \\
&\leq \lim_{t \downarrow 0, n \uparrow \infty} \sup_{s \leq t, m \leq n} F_s (g_m)= \lim_{t \downarrow 0, n \uparrow \infty} \sup_{s \leq t} F_s (g_n) \leq \lim \sup_{n \to \infty} G(g_n) = G(g).
\end{split}
\end{equation*}
We therefore obtain
$$\lim_{t \downarrow 0} F_t (g) = G(g).$$
\end{proof}


\begin{proof}[Proof of Corollary \ref{th-weyl}]
As usual, we assume that $(M,g)$ is a compact Riemannian manifold, and $V \in L^\infty(M,g)$.
Let $\{ \lambda_k \}_{k \geq 1}$ be the spectrum of the Schr\"odinger operator, $\Delta + V$, on $M$.   We shall apply the Karamata Lemma with the measure
$$d\nu := \sum_{k \geq 1} \delta_{\lambda_k}.$$
Then we note that
$$\int_0 ^x d\nu(\lambda) = N(x) = \# \{ \lambda_k \leq x \} - n_-.$$
Above, $n_-$ is the number of negative eigenvalues of the Schr\"odinger operator.  It is well known that $n_-$ is finite whenever the potential $V$ is bounded below. For the sake of completeness, however, we will provide the brief argument for this fact here. Let $\phi_k$ be the unitary eigenfunction corresponding to $\lambda_k$ such that $\Delta_V \phi_k = \lambda_k \phi_k$ and $\int_M \phi_k ^2 =1$. First we note that 
\[
\lambda_k = \int_M |\nabla \phi_k|^2 + \int_M V \phi_k^2. 
\]
Since
$$\left| \int_M V \phi_k^2 \right| \leq \|V\|_\infty,$$
we have
$$\lambda_k \geq \int_M |\nabla \phi_k|^2 - \|V\|_\infty.$$
The second term above is fixed, and the first term is positive for all non-constant eigenfunctions.  The spectral theorem in this setting implies that the eigenvalues may only accumulate at $\pm \infty$.   By the above estimate, the eigenvalues may only accumulate at $+ \infty$, and there are at most finitely many negative eigenvalues.

We shall apply Lemma \ref{klemma2} to the function,
\begin{equation} \label{cleverf}
g(x) = \left\{ \begin{array}{ll} 0;&\quad x\in\left[0,e^{-1}\right]\cup [1,\infty) \\ \frac{1}{x};&\quad x\in \left(e^{-1},1\right)\end{array} \right..
\end{equation}

Observe that $e^{-t\lambda} \in \left[0,e^{-1}\right]$, $\Leftrightarrow$ $t\lambda \geq 1$ $\Leftrightarrow$ $t\geq \frac q \lambda$ $\Leftrightarrow$ $\lambda \geq \frac 1 t$. On the other hand $e^{-t\lambda} \geq 1$ $\Leftrightarrow$ $t\lambda \leq 0$. Therefore, for $t >0$,
\begin{equation*}  
g\left(e^{-t\lambda}\right) = \left\{ \begin{array}{ll} 0; &\quad \lambda \geq \frac 1 t \;\mathrm{or}\;\lambda < 0 \\ e^{t\lambda}; &\quad 0 <\lambda < \frac 1 t\end{array}\right..
\end{equation*}

By Proposition \ref{prophttr},
$$\lim_{t \downarrow 0} t^{n/2} \int_0 ^\infty e^{-t \lambda} d\nu(\lambda) = \lim_{t\downarrow 0} t^{n/2}\left( \tr e^{-t\Delta_V} - \sum_{\lambda_k < 0} e^{-\lambda_k t} \right)= \frac{ \Vol(M)}{(4\pi)^{n/2}}.$$
This follows from the fact that the sum over the negative eigenvalues is a finite sum and therefore the factor of $t^{n/2}$ kills it as $t \downarrow 0$.

We therefore let
$$c :=  \frac{ \Vol(M)}{(4\pi)^{n/2}}, \quad \alpha = \frac{n}{2}$$
Applying Karamata's Lemma with the aforementioned $g$, $c$, and $\alpha$,
\begin{equation*}
\lim_{t\downarrow 0} t^\alpha \int_0^\infty g\left(e^{-t\lambda}\right) e^{-t\lambda} \mathrm d\nu(\lambda) = \frac{c}{\Gamma\left(\frac n 2\right)} \int_0^\infty g\left(e^{-t}\right) t^{\frac n 2 -1} e^{-t} \mathrm dt.
\end{equation*}

By the computation
\begin{equation*} \label{form_f}
g\left(e^{-t }\right) = \left\{ \begin{array}{ll} 0; &\quad t \geq 1 \;\mathrm{or}\; t < 0 \\ e^{t }; &\quad 0 < t <  1 \end{array}\right..
\end{equation*}

we have
\begin{equation*}
\lim_{t \downarrow 0} t^\alpha \int_0^{1/t} e^{t\lambda} e^{-t\lambda} \mathrm d\nu(\lambda) = \lim_{t \downarrow 0} t^\alpha N\left(\frac 1 t\right) = \frac{c}{\Gamma\left(\frac n 2\right)}\int_0^1 e^t t^{n/2-1} e^{-t} \mathrm dt = \frac{c}{\frac{n}{2}\Gamma\left(\frac{n}{2}\right)}.
\end{equation*}
Above we have assumed $n\geq 2$. Thus $t^{n/2} N\left(\frac{1}{t}\right) \to \frac{2c}{n\Gamma(n/2)}$ which is equivalent to
\begin{equation*}
N(\lambda) \sim \frac{2\lambda^{n/2}}{n\Gamma\left(\frac{n}{2}\right)}\frac{\Vol(M)}{(4\pi)^{n/2}}, \quad \lambda \to \infty.
\end{equation*}
 Since
 $$\omega_n = \frac{2 \pi^{n/2}}{n \Gamma\left( \frac n 2 \right)},$$
 the proof of Weyl's law for the operator $\Delta + V$ is complete.

To prove Weyl's law for the drifting Laplacian on a weighted manifold, we use the fact that is is unitarily equivalent to a Schr\"odinger operator. We consider a function $f$ on the manifold, $M$, such that $\Delta f$ and $\nabla f$ are both in $L^\infty (M,g)$.  Let $L^2_f$ denote the set of $L^2$ integrable functions on $M$ with respect to the weighted measure $d\mu_f$,  $L_f^2(M) =\{ u\ \big| \int_M u^2 e^{-f} \, d\mu <\infty \} $. Using the transformation $T: L^2 \to L^2_f$ given by $T(u)=e^{\frac 12 \, f}u$ we have that the drifting Laplacian is unitarily equivalent to the Schr\"odinger operator,
$$\Delta + V, \quad \text{where}  \quad V = \frac{1}{2} \Delta f + \frac{1}{4} | \nabla f|^2.$$
Under these assumptions the potential function $V = \frac{1}{2} \Delta f + \frac{1}{4} | \nabla f|^2 \in L^\infty (M,g)$. Consequently, although these operators need not have the same eigenfunctions, they do have the same eigenvalues.  In other words, for the above $V$ and $f$, the operators $\Delta_V$ and $\Delta_f$ are isospectral.   We therefore apply the preceding proof of Weyl's law for the operator $\Delta + V$ and conclude that the operator $\Delta_f$ obeys the same Weyl law as $\Delta + V$.
\end{proof}

\begin{remark}
It is straightforward to prove that Weyl's law is equivalent to
$$\lim_{k \to \infty} \frac{\lambda_k}{k^{2/n}} = \frac{ \omega_n^{2/n}}{(2\pi)^{2} \Vol (M)^{2/n}}.$$
Equivalently, one has
$$\lambda_k = k^{2/n} \frac{ \omega_n^{2/n}}{(2\pi)^{2} \Vol  (M)^{2/n}} + o(k^{2/n}), \quad k \to \infty.$$
\end{remark}


\section{Heat trace and regularity of potential} \label{S3}
In this section we demonstrate that the sharpness of the remainder estimate in the short time asymptotic expansion of the heat trace is equivalent to higher regularity for the potential. It turns out that the regularity result of Theorem \ref{thm3.2} is determined entirely by the term $\tr W_2$ we saw in the previous section.

\begin{prop} \label{prop_reg} Assume that $V \in L^\infty$.  If $\tr W_2(t) $ has an expansion as $t \downarrow 0$ of the form
$$\tr W_2 (t) = \frac{t^2}{2 (4\pi t)^{n/2}}\left( c + O(t) \right).$$
then $V \in H^1$, and
$$c = \|V\|_{L^2} ^2.$$
Conversely, if $V \in H^1$, then $\tr W_2 (t) $ has such an expansion and
$$\tr W_2 (t) = \frac{t^2}{2 (4\pi t)^{n/2}}\left( \|V\|_{L^2} ^2 + O(t) \right).$$
\end{prop}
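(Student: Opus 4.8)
The plan is to start from the spectral reduction already carried out in the proof of Proposition~\ref{prophttr}. Equation \eqref{W_2e6} gives
$$2(4\pi t)^{n/2} t^{-2}\tr W_2(t) = \int_0^1 \int_M U_V(tv(1-v), z)\, V(z)\, dz\, dv + O(t),$$
so it suffices to analyze $I(t) := \int_0^1 \int_M U_V(tv(1-v), z)\, V(z)\, dz\, dv$ and show $I(t) = \|V\|_{L^2}^2 + O(t)$ if and only if $V \in H^1$. Expanding $V = \sum_k \hat V_k \phi_k$ in the $L^2$-orthonormal eigenbasis $\{\phi_k\}$ of $\Delta$ with eigenvalues $\lambda_k \geq 0$, the heat solution is $U_V(s,\cdot) = \sum_k e^{-\lambda_k s}\hat V_k \phi_k$, and orthonormality gives $I(t) = \sum_k \hat V_k^2 \int_0^1 e^{-\lambda_k t v(1-v)}\, dv$. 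Since $\sum_k \hat V_k^2 = \|V\|_{L^2}^2$, I would recast the error as
$$\|V\|_{L^2}^2 - I(t) = \sum_k \hat V_k^2\, \psi(\lambda_k t),\qquad \psi(a) := \int_0^1 \bigl(1 - e^{-a v(1-v)}\bigr)\, dv,$$
a convergent sum of non-negative terms.

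The technical heart is a two-sided comparison $\psi(a) \asymp \min(a,1)$ for $a \geq 0$. The upper bound $\psi(a) \leq \min(a/6,\,1)$ follows from $1 - e^{-x} \leq \min(x,1)$ together with $\int_0^1 v(1-v)\,dv = 1/6$; the lower bound $\psi(a) \geq c_1\min(a,1)$ for some $c_1 > 0$ follows from the concavity estimate $1 - e^{-x} \geq 4(1-e^{-1/4})\,x$ on $[0,1/4]$ (applicable since $av(1-v) \leq a/4 \leq 1/4$ when $a \leq 1$) together with monotonicity of $\psi$ for $a \geq 1$. Recalling the standard identity $\sum_k \lambda_k \hat V_k^2 = \|\n V\|_{L^2}^2$, with $V \in H^1$ equivalent to finiteness of this sum, the forward implication is then clean: if $V \in H^1$, the upper bound gives $\|V\|_{L^2}^2 - I(t) \leq \tfrac{t}{6}\sum_k \lambda_k \hat V_k^2 = O(t)$, and feeding this back into \eqref{W_2e6} yields the stated expansion with $c = \|V\|_{L^2}^2$.

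The converse is the step I expect to be most delicate. Assuming $\tr W_2$ has the stated expansion, comparison with \eqref{W_2e6} forces $I(t) = c + O(t)$; since $I(t) \to \|V\|_{L^2}^2$ this pins down $c = \|V\|_{L^2}^2$ and yields $\|V\|_{L^2}^2 - I(t) \leq Ct$ for all small $t$. I would then exploit the lower bound on $\psi$ by truncating to $\lambda_k \leq 1/t$, where $\psi(\lambda_k t) \geq c_1 \lambda_k t$, to obtain
$$c_1 t \sum_{\lambda_k \leq 1/t} \lambda_k \hat V_k^2 \leq \sum_k \hat V_k^2\, \psi(\lambda_k t) = \|V\|_{L^2}^2 - I(t) \leq Ct,$$
so the partial sums $\sum_{\lambda_k \leq 1/t}\lambda_k \hat V_k^2$ are bounded by $C/c_1$ uniformly in $t$. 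Letting $t \downarrow 0$ and invoking monotone convergence gives $\sum_k \lambda_k \hat V_k^2 < \infty$, i.e.\ $V \in H^1$. The main obstacle is securing the sharp uniform lower bound $\psi(a) \gtrsim \min(a,1)$: it is precisely this bound, rather than the mere fact that $\psi(a)$ tends to a positive constant, that converts the $O(t)$ decay rate into a uniform bound on the homogeneous $\dot H^1$ energy. The upper bound and the reduction to $I(t)$ are comparatively routine.
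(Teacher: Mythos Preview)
Your proposal is correct. The reduction to $I(t)$ via \eqref{W_2e6} and the spectral expansion $\|V\|_{L^2}^2 - I(t) = \sum_k |\widehat V_k|^2\,\psi(\lambda_k t)$ are exactly what the paper does. Where you diverge is in the handling of the key step: the paper computes the \emph{exact} limit
\[
\lim_{t\downarrow 0}\frac{1}{t}\bigl(\|V\|_{L^2}^2 - I(t)\bigr)=\frac{1}{6}\sum_k \lambda_k|\widehat V_k|^2=\frac{1}{6}\|\n V\|_{L^2}^2,
\]
by first interchanging the limit with the sum over $k$ (tacitly a monotone convergence argument, since $a\mapsto (1-e^{-a})/a$ is decreasing) and then rewriting the result as $-\frac{1}{6}\lim_{t\downarrow 0}\frac{d}{dt}(e^{-t\Delta}V,V)$. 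Your route replaces this limit computation by the two-sided estimate $\psi(a)\asymp\min(a,1)$ together with the truncation $\lambda_k\le 1/t$, which converts the $O(t)$ remainder directly into a uniform bound on the partial sums $\sum_{\lambda_k\le 1/t}\lambda_k|\widehat V_k|^2$.

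What this buys you: your argument is more elementary and makes the mechanism of the converse completely transparent, with no need for the detour through the semigroup derivative or for justifying an interchange of limit and infinite sum. What the paper's route buys: it identifies the precise constant $1/6$ in front of $\|\n V\|_{L^2}^2$, which is not needed for the proposition as stated but would matter if one wanted the next coefficient in the expansion.
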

\begin{proof}

To prove the proposition, we begin by  assuming that
\[
\tr W_2 (t)  = \frac 12 \,  t^2 \, (4\pi t)^{-n/2}  \left[ c +  O(t)\right].
\]

Since $V\in L^\infty$,   (\ref{W_2e1}) gives $c = \|V\|_{L^2} ^2,$ and we now have
\begin{equation} \label{w2e3}
\tr W_2 (t)  = \frac 12 \,  t^2 \, (4\pi t)^{-n/2}  \left[ \|V\|_{L^2} ^2  +  O(t)\right].
\end{equation}

Comparing (\ref
{W_2e5}) to (\ref{w2e3}) we therefore obtain
\[
\frac {1}{t} \left|  \| V \|_{L^2}^2  - \int_0^1 \int_{M \times M}  H_0 (tv(1-v), y,z) \times  \, V(y) V(z)  dy dz dv \right| = \frac{O(t)}{t} \leq C
\]
for $t$ small enough.

By definition,
\[
\int_{M \times M}  H_0 (tv(1-v), y,z) \times  \, V(y) V(z)  dy dz = ( e^{-tv(1-v) \Delta_0}  V, V)
\]
therefore the above estimate also shows that
\begin{equation} \label{W_2e2}
\frac {1}{t} \left| \| V \|_{L^2}^2 -\int_0^1( e^{-tv(1-v) \Delta_0}  V, V) \,dv  \right|  \leq C  \textrm{ as } t \to 0^+.
\end{equation}

We make the following observation. For any $\lambda_k \geq 0$
\begin{align*}
\lim_{t\to 0^+ } \frac 1t \,  \left[ 1- \int_0^1 e^{-tv(1-v) \lambda_k} \,dv   \right] & =   \int_0^1 v(1-v) \lambda_k \,dv =  \frac 16 \, \lambda_k \\
& =  \frac 16 \,  \lim_{t\to 0^+ } \frac 1t \,  \left[ 1- \int_0^1 e^{-t\lambda_k} \,dv   \right].
\end{align*}

Using the orthonormal basis expansion \eref{H0_onb} for the for the heat kernel, we get
\begin{align*}(e^{-tv(1-v) \Delta_0} V, V) &= \sum_{k \geq 1} e^{-\lambda_k t v (1-v)} \int_{M \times M} \phi_k (z') V(z') \phi_k (z) V(z) dz dz'  \\
&= \sum_{k \geq 1} e^{-\lambda_k t v (1-v)}  |\widehat{V_k}|^2,
\end{align*}
where
$$\widehat{V_k} = \int_M V(z) \phi_k (z) dz,$$
is the $k^{th}$ Fourier coefficient of $V$ with respect to the basis.  We also note that
$$\|V\|_{L^2 (M)} ^2 = \sum_{k \geq 1} |\widehat{V_k}|^2.$$
Consequently,
$$\int_0 ^1 (e^{-tv(1-v) \Delta_0} V, V) dv - \|V\|^2 _{L^2} = \int_0 ^1 \sum_{k \geq 1} (e^{-\lambda_k t v (1-v)} -1) |\widehat{V_k}|^2 dv.$$
Since $V \in L^2$, the above expression converges absolutely and uniformly.  We may therefore compute
\begin{equation} \label{W_2e7}
\begin{split}
\lim_{t \to 0^+} \frac{1}{t}  &\left[ \int_0^1( e^{-tv(1-v) \Delta_0}  V, V) \,dv  - \| V \|_{L^2}^2 \right] = \lim_{t \to 0^+} \frac{1}{t}  \int_0 ^1 \sum_{k \geq 1} (e^{-\lambda_k t v (1-v)} -1) |\widehat{V_k}|^2 dv \\
&=\sum_{k \geq 1} \lim_{t \to 0^+} \frac{1}{t} \int_0 ^1 (e^{-\lambda_k t v (1-v)} -1) |\widehat{V_k}|^2 dv  = \frac{1}{6} \sum_{k \geq 1} \lim_{t \to 0^+} \frac{1}{t} \int_0 ^1 (e^{-\lambda_k t } -1) |\widehat{V_k}|^2 dv.
\end{split}
\end{equation}

On the other hand, by the definition of the heat operator,
$$\sum_{k \geq 1} e^{-\lambda_k t} |\widehat{V_k}|^2 = (e^{-t \Delta_0} V, V).$$
Substituting this in the right side of \eref{W_2e7}, we obtain
\begin{equation} \label{W_2e3}
\lim_{t\to 0^+ } \frac {1}{t} \left[ \int_0^1( e^{-tv(1-v) \Delta_0}  V, V) \,dv  - \| V \|_{L^2}^2 \right] = \frac 16 \,  \lim_{t\to 0^+ } \frac {1}{t} \left[ \int_0^1( e^{-t\Delta_0}  V, V) \,dv  - \| V \|_{L^2}^2 \right]
\end{equation}

The definition of the heat operator also gives that
\begin{equation} \label{W_2e4}
\begin{split}
\lim_{t\to 0^+ } \frac {1}{t} \left[ \int_0^1( e^{-t\Delta_0}  V, V) \,dv  - \| V \|_{L^2}^2 \right]&  =  - \lim_{t\to 0^+ } \frac{d}{dt}(e^{-t\Delta_0} V, V)   \\
&=  \lim_{t\to 0^+ }  (\Delta_0  e^{-t\Delta_0} V, V) =  \| \n V\|_{L^2}^2.
\end{split}
\end{equation}

The estimate in \eref{W_2e2} together with the identities \eref{W_2e3} and \eref{W_2e4} demonstrate that in fact $\| \n V\|_{L^2}^2 < \infty$  and we can conclude that $V$ is in $H_1$.  We have therefore proven that whenever $\tr W_2 (t)$ has an expansion as above, this implies that $V \in H_1$.

Conversely, if we assume that $V \in H_1$, then by \eref{W_2e4}, we see that we obtain the improvement in the remainder estimate in $\tr W_2(t)$, so that
$$\tr W_2 (t) = \frac{t^2}{2 (4\pi t)^{n/2}}\left( \|V\|_{L^2} ^2 + O(t) \right).$$
Hence, we see that $\tr W_2 (t)$ has an asymptotic expansion of this type if and only if $V \in H_1$.
\end{proof}

Finally, we use the proposition to prove Theorem \ref
{thm3.2}.
\begin{proof}
The \`a priori estimates on $\tr W_k (t)$ for $k \geq 3$ given in \eref{W_ke1} show that
$$\tr \sum_{k \geq 3} W_k (t) \leq C t^{3-n/2}, \quad 0 < t \leq 1$$
and in consequence
\[
\tr e^{-t\Delta_V} - \tr e^{-t\Delta} = \tr W_1 (t)  +   \tr W_2(t) + O(t^{3-n/2}).
\]

The asymptotic expansion for $\tr W_1(t)$ from \eref{trw1} together with the asymptotic expansion for the trace of the semigroup $e^{-t\Delta}$ give
\begin{equation*}
\begin{split}
\tr e^{-t\Delta_V} & -(4\pi t)^{-n/2} \left[   \Vol (M)  +   t \int_M   \left( \frac{1}{6} \,K(z)  - V(z) \right)\, d\mu (z) \right.\\ & \qquad \left. + t^{2} \left( a_{0,2}  - \frac{1}{6} \int_M   K(z) V(z) \, d\mu (z) \right) \right]   =  \tr W_2(t) + O(t^{3-n/2}).
\end{split}
\end{equation*}
The theorem therefore follows from Proposition \ref{prop_reg}.
\end{proof}

We proved in Proposition \ref{prophttr} that for $V\in L^\infty$ we have  an  expansion for the heat trace of $\Delta+V$  with an error term of order $ o(t^{2-n/2})$  for $t$ small. Theorem \ref{thm3.2} implies that if the error term is slightly better, then the potential must in fact belong to $H^1$. The converse is also true. This regularity result was inspired by the work of H. Smith and M. Zworski for Schr\"odinger operators on $\R^n$ \cite{smith-z}. There the authors were able to use the Fourier transform on Euclidean space to express the trace of the heat kernel and relate it to the $H^m$ norms of the potential, but also to obtain estimates for the terms $W_k$.  In this article we provide a generalization of some of their results to the case of compact manifolds. In our case, we are not able to use the nice properties of the Fourier transform, but instead  rely on the definition of the $H^1$ norm with respect to the heat operator, as well the expansion of the heat kernel with respect to its orthonormal basis of $L^2$ eigenfunctions. We believe that similar results should also hold on non-compact manifolds where the drifting Laplacian has a discrete spectrum and its heat operator can be expressed using such an orthonormal basis of eigenfunctions. We were recently informed by H. Smith that he has also independently worked out a generalization of his results with M. Zworski for complete manifolds.

\begin{cor} \label{httr_drift} Let $(M, g)$ be a smooth, compact Riemannian manifold with Laplace operator $\Delta$, and let $f$ be a real-valued function on $M$, such that
$$\Delta f \textrm{ and }  \nabla f \in L^\infty (M, g).$$
Then, the heat kernel for drifting Laplacian $\Delta_f = \Delta + \nabla f \cdot \nabla$ has a small-time asymptotic expansion as $t \downarrow 0$ of the form
\begin{equation*}
\begin{split}
\tr e^{-t\Delta_V}  =  & \frac{\Vol (M)}{(4\pi t)^{n/2}} + (4\pi t)^{-n/2} \left[ t \int_M \left(\frac{1}{6} \,   K(z)   -  \frac{1}{4} \, |\n f(z)|^2 \right) \; d\mu(z)   \right. \\
&\quad \quad + \left. t^2  \left( a_{0,2} - \int_M \frac{1}{6} \,   K(z) V(z)  \; d\mu(z) + \frac{1}{2} \|V\|_{L^2} ^2 \right) \right]+ R(t)
\end{split}
\end{equation*}
where the remainder $R(t) = o(t^{2-n/2})$ as $t \downarrow 0$.
Moreover, the remainder $R(t) = O(t^{3-n/2})$ if and only if $V \in H^1$.
\end{cor}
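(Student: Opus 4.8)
The plan is to reduce the entire statement to the Schr\"odinger case already settled in Proposition \ref{prophttr} and Theorem \ref{thm3.2}, by invoking the unitary equivalence recorded in the proof of Corollary \ref{th-weyl}. Recall that the map $T : L^2(M, d\mu) \to L^2_f(M)$, $Tu = e^{f/2} u$, is unitary and conjugates $\Delta_f$ to the Schr\"odinger operator $\Delta + V$ with
$$V = \frac{1}{2}\Delta f + \frac{1}{4}|\nabla f|^2.$$
Since $\Delta f$ and $\nabla f$ lie in $L^\infty$ and $M$ is compact, $V \in L^\infty(M,g)$, so Proposition \ref{prophttr} applies to $\Delta_V$. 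Unitary equivalence forces $\Delta_f$ and $\Delta_V$ to be isospectral, whence $\tr e^{-t\Delta_f} = \tr e^{-t\Delta_V}$ for every $t>0$, and the whole six-term expansion of Proposition \ref{prophttr} transfers verbatim to $\Delta_f$, with remainder $o(t^{2-n/2})$.

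It then remains only to rewrite the coefficients in terms of $f$. The quadratic coefficient $a_{0,2} - \frac{1}{6}\int_M K V\, d\mu + \frac{1}{2}\|V\|_{L^2}^2$ is left untouched, exactly as in the statement. For the linear coefficient I would substitute $V = \frac{1}{2}\Delta f + \frac{1}{4}|\nabla f|^2$ and invoke the identity
$$\int_M \Delta f\, d\mu = 0,$$
which collapses $\int_M V\, d\mu$ to $\frac{1}{4}\int_M |\nabla f|^2\, d\mu$ and thereby turns the linear term $\int_M(\frac{1}{6}K - V)\, d\mu$ into the asserted $\int_M(\frac{1}{6}K - \frac{1}{4}|\nabla f|^2)\, d\mu$.

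The one point needing care --- and the main, if modest, obstacle --- is justifying $\int_M \Delta f\, d\mu = 0$ under the finite-regularity hypothesis $\Delta f \in L^\infty$ rather than $f \in C^2$, so that the classical divergence theorem is not directly available. Here I would appeal instead to the weak characterization of $\Delta$: for $\phi$ in the form domain one has $\int_M (\Delta f)\phi\, d\mu = \int_M \nabla f \cdot \nabla \phi\, d\mu$, and taking $\phi \equiv 1$ (legitimate since the constant function lies in $H^1(M)$ and $\Delta f \in L^\infty \subset L^1$ on the compact $M$, so the pairing converges) yields $\int_M \Delta f\, d\mu = \int_M \nabla f \cdot \nabla 1\, d\mu = 0$.

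Finally, the \emph{moreover} clause is immediate from Theorem \ref{thm3.2}: because $\tr e^{-t\Delta_f} = \tr e^{-t\Delta_V}$, the remainder improves to $O(t^{3-n/2})$ if and only if the associated potential $V = \frac{1}{2}\Delta f + \frac{1}{4}|\nabla f|^2$ belongs to $H^1$. No further analysis of $f$ itself is required, since the sharpness criterion is phrased directly in terms of $V$.
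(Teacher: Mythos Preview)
Your proposal is correct and follows essentially the same approach as the paper: reduce to the Schr\"odinger case via the unitary equivalence $T$, invoke Proposition \ref{prophttr} and Theorem \ref{thm3.2}, and simplify the linear coefficient using $\int_M V\,d\mu = \tfrac{1}{4}\int_M |\nabla f|^2\,d\mu$. Your careful justification of $\int_M \Delta f\,d\mu = 0$ under the weak hypothesis $\Delta f \in L^\infty$ via the form-domain pairing with $\phi \equiv 1$ is a point the paper simply asserts without comment, so your version is in fact more complete.
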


\begin{proof}
The corollary follows immediately from the observation that the Schr\"odinger operator $\Delta + V$ and the drifting Laplacian $\Delta + \nabla f \cdot \nabla$ are unitarily equivalent, and that $\int_M V(z)  \; d\mu(z) =  1/4  \int_M | \nabla f(z)|^2 \;d\mu(z)$.
\end{proof}

The proof of Corollary \ref{cor-isosp} is a simple consequence.
\begin{proof}[Proof of Corollary \ref{cor-isosp}]
If the Schr\"odinger operators $\Delta + V$ and $\Delta + \tilde{V}$ are isospectral, then they have the same heat trace.  By the preceding corollary, the remainders are either both $O(t^{3-n/2})$ or $o(t^{2-n/2})$, and therefore either both $V$ and $\tilde V$ are in $H^1$, or they are both not in $H^1$.
\end{proof}

\section{Heat trace for the drifting Laplacian with smooth weight function} \label{S4}
In this section we will give the classical method for obtaining the heat trace of the drifting Laplacian.   This is done via the parametrix method, as in the case of the Laplacian, and works particularly well for a smooth weight function $f$. Since we are mainly interested in the coefficients for the short time asymptotic expansion of the heat trace we will omit some of the simply technical elements of the arguments as they are identical to the unweighted case (we will refer the interested reader to \cite{Ro} for the details).

As the manifold is compact, there exists a uniform constant $\eps>0$ such that for any $x\in M$ the exponential map at $x$ is a diffeomorphism from the ball of radius $\eps$ in the tangent space onto $B_x(\eps)$. Fix a point $x\in M$. For any $y \in B_x(\eps)$ the Riemannian distance, $d(x,y)$, from $x$ to $y$ satisfies $d(x,y)<\eps$.  We let \[
U_\eps = \{ (x,y) \in M\times M \;  | \;  y \in B_x(\eps)\;\}.
\]

Let
\[
G(t,x,y) =(4\pi t)^{-n/2} e^{-\frac{d^2(x,y)}{4t}}
\]
be the direct analogue of the Euclidean heat kernel on $M$ which belongs to $C^\infty (\mathbb{R}^+\times U_\eps)$. Set
\[
u(t,x,y)=  u_0(x,y) + \ldots + u_k(x,y) \,t^k
\]
where the functions $u_i(x,y)$ are to be determined. Define
\[
S_k(t,x,y) =  G(t,x,y) \; e^{\frac 12 (f(x)+f(y))}  \; u(t,x,y).
\]

Recalling that
\[
\Delta(h\cdot g)= (\Delta h) g + h \Delta g - 2\< \n h, \n g\>
\]
and for $(x,y)\in U_\eps$ we compute
\begin{equation*}
\begin{split}
 \left( \frac{\p}{\p t} + \Delta_{f,y} \right)  S_k  = &  \left( \frac{\p}{\p t} + \Delta_y \right)  (G u) \cdot  e^{\frac 12 (f(x)+f(y))}+
 G u \cdot \Delta_y  (e^{\frac 12 (f(x)+f(y))})\\
 & - 2 \< \n_y   (G u ), \n_y   (e^{\frac 12 (f(x)+f(y))}) \> + \<  \n_y   f,  \n_y   (G u \,e^{\frac 12 (f(x)+f(y)))} \>
\end{split}
\end{equation*}
where the drifting Laplacian, Laplacian and gradient are taken with respect to the $y$ variable.  Using (3.8) of \cite{Ro} we get
\begin{equation} \label{heat1}
\begin{split}
 \left( \frac{\p}{\p t} + \Delta_{f,y} \right) & S_k  =  G \,e^{\frac{1}{2} (f(x)+f(y))} \left[ u_1 + \ldots + k t^{k-1} u_k + \frac{r}{2 t}\, \frac{D'}{D} (u_0 + \ldots +  t^{k} u_k) \right.  \\
&  \qquad \qquad  +\left. \frac{r}{t} \left(\frac{\p u_0}{\p r} + \ldots + t^k\frac{\p u_k}{\p r}\right) + \Delta_y u_0 + \ldots + t^k \Delta_y u_k \right]     \\
 & + G \, (u_0 + \ldots + t^k u_k) \; e^{\frac 12 (f(x)+f(y))} \cdot \left[ \frac 12 \, \Delta_y f (y) + \frac 14 \, |\n_y f(y)|^2 \right]
\end{split}
\end{equation}
where
\[
D=\text{det}(d \exp_x)
\]
is the determinant of the Riemannian volume form centered at $x$, and $D' = \p D/ \p r$ is its derivative with respect to the radial function $r(y)=d(x,y).$

To obtain the parametrix we will choose $u_i$ inductively such that the coefficient of $t^i$ vanishes for $-1\leq i \leq k-1$. The coefficient of $t^{-1}$ will vanish, if we set
\[
\frac{r}{2}\, \frac{D'}{D} \, u_0 + r \frac {\p u_0}{\p r} =0.
\]
This first order differential equation has a smooth solution for $r<\eps$ given by
\[
u_0(x,y) = D^{-1/2}(y)
\]
which is a smooth function on $U_\eps$ independently of $f$ and satisfies
\[
u_0(x,x)=1.
\]

For $t^{i-1}$ we get the equation
\[
i \, u_i + \frac{r}{2}\, \frac{D'}{D} \, u_i + r \frac {\p u_i}{\p r} + \Delta_y u_{i-1} +  \left[ \frac 12 \, \Delta_y f (y) + \frac 14 \, |\n_y f(y)|^2 \right]  \cdot u_{i-1} =0.
\]
Letting $x(s)$ be the unit speed geodesic from $x$ to $y$ for $s\in [0,r]$ with $x(0)=x$ and  $x(r)=y$, we may obtain a solution to the above equation from the integral equation
\begin{equation} \label{u_i}
\begin{split}
u_i&(x,y)  = - r^{-i}(x,y) D^{-1/2}(y) \;  \left[  \int_0^r D^{1/2}(x(s))  \cdot   (\Delta_{x(s)} u_{i-1}) (x,x(s)) \cdot s^{i-1} ds \right.\\
&+ \left.   \int_0^r D^{1/2}(x(s)) \left( \frac 12 \, \Delta f (x(s)) + \frac 14 \, |\n f(x(s))|^2 \right)  u_{i-1}(x,x(s)) \cdot s^{i-1} ds \right].
\end{split}
\end{equation}
We take this opportunity to correct a small misprint in \cite{Ro} (3.12); in the above $x$ stays fixed, and $y$ varies along the geodesic from $x$. Observe that the functions $u_i$ are smooth on $M\times M$.

As a result,
\begin{equation} \label{heat2}
\begin{split}
 \left( \frac{\p}{\p t} + \Delta_{f,y} \right)   S_k  = &   G(t,x,y) \, t^k  \, e^{\frac{1}{2} (f(x)+f(y))}  \\
 &  \cdot  \left[  \Delta_y u_k (x,y)   +  \left(\frac 12 \, \Delta f (y) + \frac 14 \, |\n f(y)|^2\right) \cdot u_k(x,y)\right].
\end{split}
\end{equation}

Define
\begin{equation*}
\eta(x,y)= \left\{
\begin{array}{cc}
   1 & \text{on} \ \ U_{\eps/2} \\
0 &  \text{on} \ \ M\times M \setminus U_{\eps}
\end{array}\right.
\end{equation*}
to be a smooth function with bounded first and second order derivatives. Then we extend $S_k$ to $M\times M$ setting
\begin{equation} \label{Hk}
h_k(t,x,y) = \eta(x,y) \cdot S_k(t,x,y) = \eta(x,y) \, G(t,x,y) \,e^{\frac 12 (f(x)+f(y))} \sum_{i=1}^k u_i(x,y)\, t^i.
\end{equation}
Each $h_k(t,x,y)$ is a smooth function on $(0,\infty) \times M \times M$.  Moreover, for $k>n/2$, $h_k(t,x,y)$ is a local parametrix of the operator $\frac{\p}{\p t} +\Delta_f$ as it satisfies the following two properties
\begin{lem}
\begin{equation*}
\begin{split}
(i) & \quad \frac{\p}{\p t} h_k +\Delta_{f,y}  h_k  \in C^0([0,\infty)\times M \times M) \\
(ii) &  \quad \lim_{t \to 0} \int_M  h_k(t,x,y) g(y) \,  d\mu_f (y) = g(x) \quad  \text{for any} \quad  g\in L^2_f(M).
\end{split}
\end{equation*}
\end{lem}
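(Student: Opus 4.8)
The plan is to verify the two properties separately, using the explicit structure of $h_k$ from \eref{Hk} together with the defining identity \eref{heat2}, and the fact (noted after \eref{u_i}) that each $u_i$ is smooth on $M\times M$ with $u_0(x,y)=D^{-1/2}(y)$ and $u_0(x,x)=1$.

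For property (i) I would decompose $M\times M$ into the region $U_{\eps/2}$, where $\eta\equiv 1$ and hence $h_k=S_k$; the transition region $U_\eps\setminus U_{\eps/2}$; and the complement of $U_\eps$, where $h_k\equiv 0$ so the quantity vanishes identically. On $U_{\eps/2}$, equation \eref{heat2} gives
\[
\left( \frac{\p}{\p t} + \Delta_{f,y}\right) h_k = G(t,x,y)\, t^k\, e^{\frac12(f(x)+f(y))} \left[ \Delta_y u_k + \left( \tfrac12 \Delta f + \tfrac14 |\n f|^2\right) u_k \right].
\]
Since $G(t,x,y)\,t^k = (4\pi)^{-n/2}\,t^{k-n/2}\,e^{-d^2(x,y)/(4t)}$ and $k>n/2$, the prefactor $t^{k-n/2}$ tends to $0$ uniformly as $t\downarrow 0$ while $e^{-d^2/(4t)}\le 1$ and the bracketed factor is smooth, hence bounded on the compact closure; this expression therefore extends continuously to $t=0$ with value $0$. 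The only remaining contributions come from the transition region, where applying $\p_t+\Delta_{f,y}$ to the product $\eta S_k$ produces, besides $\eta(\p_t+\Delta_{f,y})S_k$, commutator terms each carrying at least one derivative of $\eta$. These derivatives are supported where $d(x,y)\ge\eps/2$, so every such term contains a factor of $G$ (or its $y$-derivatives) bounded by a constant times $t^{-m}e^{-\eps^2/(16t)}$, which is $O(t^\infty)$; hence they too extend continuously by $0$. Combining the three regions yields (i).

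For property (ii) I would show that $P_t g(x):=\int_M h_k(t,x,y)\,g(y)\,d\mu_f(y)$ is an approximate identity on $L^2_f(M)$, the convergence being in the $L^2_f$ norm. The decisive step is the leading ($u_0$) term: writing $d\mu_f(y)=e^{-f(y)}d\mu(y)$, its contribution is
\[
\int_M \eta(x,y)\, G(t,x,y)\, e^{\frac12(f(x)-f(y))}\, u_0(x,y)\, g(y)\, d\mu(y).
\]
For continuous $g$ I would pass to geodesic normal coordinates $y=\exp_x(w)$, in which $d(x,y)=|w|$ and $u_0(x,y)\,d\mu(y)=D^{-1/2}(\exp_x w)\,D(\exp_x w)\,dw=D^{1/2}(\exp_x w)\,dw$, a factor equal to $1$ at $w=0$. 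This reduces the integral to a standard Gaussian concentration: as $t\downarrow 0$ the kernel $(4\pi t)^{-n/2}e^{-|w|^2/(4t)}$ has total mass $1$ and concentrates at $w=0$, where $\eta=1$, $e^{\frac12(f(x)-f(x))}=1$, and $g(\exp_x 0)=g(x)$, giving the limit $g(x)$. The terms with $i\ge 1$ each carry an explicit factor $t^i$ times a bounded integral and hence vanish in the limit.

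To upgrade from continuous $g$ to arbitrary $g\in L^2_f(M)$, I would establish a uniform bound $\|P_t\|_{L^2_f\to L^2_f}\le C$ for $t\in(0,t_0]$ by a Schur test applied to $h_k(t,x,y)$ against $d\mu_f$: both $\int |h_k|\,d\mu_f(y)$ and $\int |h_k|\,d\mu_f(x)$ reduce, as above, to integrals of an $L^1$-normalized Gaussian against factors that are bounded on the compact manifold, hence are uniformly bounded. Since continuous functions are dense in $L^2_f(M)$ and $P_t g\to g$ for such $g$, the uniform boundedness gives $\|P_t g-g\|_{L^2_f}\to 0$ for all $g\in L^2_f(M)$, which is (ii). The main obstacle I anticipate is the leading-term normalization in (ii): one must check carefully, in normal coordinates, that the choice $u_0=D^{-1/2}$ together with the weights $e^{\pm f/2}$ and the density $D$ conspire so that the total mass of the kernel tends to exactly $1$ rather than some $x$-dependent constant; once this normalization is in hand, the Gaussian concentration and the $O(t^\infty)$ off-diagonal decay controlling the $\eta\not\equiv 1$ part are routine.
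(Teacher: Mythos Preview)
Your proof is correct and for part (i) matches the paper's argument exactly: the same three-region decomposition, the use of \eref{heat2} on $U_{\eps/2}$ with the $t^{k-n/2}$ factor forcing continuity at $t=0$, and the $O(t^\infty)$ decay of the commutator terms on the transition annulus where $d\ge\eps/2$.

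For part (ii) your approach is essentially the same but more thorough. The paper makes the substitution $g=e^{f/2}w$ with $w\in L^2(M)$, which cleanly cancels the weight factors and reduces the claim to $\int_M G\,\eta\,u_i\,w\,d\mu\to u_i(x,x)\,w(x)$; it then simply asserts this limit ``since $G(t,x,y)$ is the heat kernel in $\mathbb{R}^n$ and $\eta(x,x)=u_0(x,x)=1$,'' without distinguishing continuous from general $L^2$ data. You carry out the same reduction (your $e^{\frac12(f(x)-f(y))}$ factor is exactly what the substitution produces) but then add the normal-coordinate computation verifying that $u_0\,d\mu=D^{1/2}\,dw$ gives the correct normalization, and you supply the Schur-test plus density argument to pass from continuous $g$ to all of $L^2_f(M)$. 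This extra care is warranted, since the paper's pointwise assertion for arbitrary $L^2$ data is informal; your version makes the $L^2_f$ convergence rigorous.
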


\begin{proof}
For $(i)$ we need to show that $\frac{\p}{\p t} h_k +\Delta_{f,y}  h_k$ extends to $t=0$. This is true for $M\times M \setminus U_\eps$ since $h_k\equiv0$. On $U_{\eps/2}$ \eref{heat2} holds, and the right set tends to $0$ as $t\to 0$ for $k>n/2$ and $f$ smooth. On $ U_{\eps} \setminus U_{\eps/2}$
\begin{equation*}
\begin{split}
\left(  \p /\p t  +\Delta_{f,y}  \right) h_k  & =  \eta \, \left(  \p /\p t  +\Delta_{f,y}  \right) S_k -2  \langle d\eta, d S_k \rangle + (\Delta_{f,y} \eta) \, S_k\\
& = (4\pi t)^{-n/2} e^{-\frac{d^2(x,y)}{4t}} \phi(t,x,y)
\end{split}
\end{equation*}
where $\phi$ is a smooth function on $(0,\infty) \times M \times M$ and has a pole of order at most $t^{-1}$. Since $d\geq \eps/2$ on this set, we can also extend $\frac{\p}{\p t} h_k +\Delta_{f,y}  h_k$ by zero to $t=0$.

Note that for $k>l+n/2$,  $\left(  \p /\p t  +\Delta_{f,y}  \right) h_k \in C^l([0,\infty)\times M \times M)$ .

For $(ii)$ we can show that for any $w\in L^2(M)$ (which is equivalent to $g=e^{\frac 12 f} w \in L^2_f$)
\begin{equation*}
\begin{split}
\lim_{t \to 0} & \int_M  h_k(t,x,y) e^{\frac 12 f(y)} w(y)  \,  d\mu_f (y) \\
& = e^{\frac 12 f(x)} \lim_{t \to 0} \sum_{i=0}^k t^i \cdot  \int_M G(t,x,y) \, \eta(x,y) \,  u_i(x,y) \, w(y) \; dy \\
& =  e^{\frac 12 f(x)} \lim_{t \to 0} \sum_{i=0}^k t^i  \,  u_i(x,x) \, w(x)  \\
&= e^{\frac 12 f(x)} w(x)
\end{split}
\end{equation*}
since $ G(t,x,y)$ is the heat kernel in $\mathbb{R}^n$ and $\eta(x,x) = u_0(x,x)=1$.

Therefore, for any $g\in L^2_f(M)$ the result follows.
\end{proof}

\begin{lem}
Let
\[
H_f(t,x,y)= h_k(t,x,y) - Q_k * h_k(t,x,y)
\]
$Q_k = \sum_{\lambda =1}^\infty (-1)^{\lambda +1} (\left( (\p/\p t +\Delta_{f,y})(h_k)\right)^{*\lambda}$. Then $H_f(t,x,y) \in C^\infty((0,\infty)\times M \times M)$, it is independent of $k$ for $k> 2+ n/2$ and it is the heat kernel of the semigroup $e^{-t\Delta_f}$.
\end{lem}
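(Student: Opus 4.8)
The plan is to identify $H_f$ as the fundamental solution produced by the classical Levi (Duhamel) parametrix iteration and to verify, in order, that the series defining it converges, that it solves the heat equation with the correct initial data, and that it is smooth, $k$-independent, and equal to the kernel of $e^{-t\Delta_f}$. Throughout, write $L := \partial/\partial t + \Delta_{f,y}$ and $K_k := L h_k$, and let the space-time convolution be
\[
(A * B)(t,x,y) = \int_0^t \int_M A(t-s,x,z)\, B(s,z,y)\, d\mu_f(z)\, ds,
\]
so that $Q_k = \sum_{\lambda\ge 1}(-1)^{\lambda+1} K_k^{*\lambda}$. First I would prove that this series converges. By the preceding lemma $K_k$ is continuous on $[0,T]\times M\times M$, hence bounded by some $C$; since $M$ is compact the $\lambda$-fold convolution integrates over a time simplex of volume $t^\lambda/\lambda!$, giving $\|K_k^{*\lambda}\|_\infty \le C^\lambda \mu_f(M)^{\lambda-1} t^{\lambda-1}/(\lambda-1)!$. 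Thus $Q_k$ converges absolutely and uniformly on $[0,T]$ and is continuous, and $Q_k*h_k$ is continuous as well. Note also that $K_k(0,\cdot,\cdot)\equiv 0$ by the preceding lemma, so $Q_k(0,\cdot,\cdot)\equiv 0$.

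The core step is to check that $L H_f = 0$. The Neumann series satisfies the Volterra identity
\[
Q_k = K_k - Q_k * K_k,
\]
obtained by factoring $K_k^{*\lambda}=K_k^{*(\lambda-1)}*K_k$ in the series. Applying $L$ to $H_f = h_k - Q_k * h_k$, the term $\Delta_{f,y}$ passes onto $h_k$ inside the convolution, while $\partial/\partial t$ is transferred onto $h_k$ by integration by parts in the convolution time variable; the resulting boundary contribution at $s\to 0$ is exactly $Q_k$, because $Q_k(0,\cdot,\cdot)=0$ kills the other endpoint and $h_k(s,\cdot,\cdot)\to\delta$ as $s\downarrow 0$ by property (ii) of the preceding lemma. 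This yields $L(Q_k*h_k) = Q_k + Q_k * K_k$, whence
\[
L H_f = K_k - \bigl(Q_k + Q_k * K_k\bigr) = K_k - K_k = 0
\]
by the Volterra identity. For the initial data, $Q_k * h_k$ carries a time integral over $[0,t]$ and therefore tends to $0$ as $t\downarrow 0$, so $H_f$ inherits from $h_k$ the reproducing property $\int_M H_f(t,x,y) g(y)\, d\mu_f(y)\to g(x)$ for all $g\in L^2_f(M)$. Hence $H_f$ is a fundamental solution of $\partial/\partial t + \Delta_f$.

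Finally I would treat regularity and uniqueness. Since $K_k \in C^l$ for $k > l + n/2$ by the preceding lemma, and convolution against the smooth kernel $h_k$ raises regularity, $H_f$ is as differentiable as we please for $k$ large; more structurally, $\partial/\partial t + \Delta_f$ is parabolic, hence hypoelliptic, so the continuous solution $L H_f = 0$ is automatically $C^\infty$ on $(0,\infty)\times M \times M$. For independence of $k$ and the identification with $e^{-t\Delta_f}$, I would invoke uniqueness of the heat kernel: two continuous fundamental solutions of $\partial/\partial t + \Delta_f$ with the same initial data agree, which follows from self-adjointness of $\Delta_f$ on $L^2_f(M)$ and the spectral representation of the semigroup (or from a standard energy estimate). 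Thus $H_f$ cannot depend on the choice of $k$ once $k > 2 + n/2$ --- the threshold that guarantees the $C^2$ regularity needed to apply $L$ --- and it must coincide with the Schwartz kernel of $e^{-t\Delta_f}$.

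The step I expect to be the main obstacle is the verification that $L H_f = 0$: one has to justify differentiating the space-time convolution under the integral sign and carry out the integration by parts that extracts the cancelling $Q_k$ from the initial-time boundary term. This is exactly where the uniform convergence and continuity established in the first step, the vanishing $Q_k(0,\cdot,\cdot)=0$, and the sharp $\delta$-limit of $h_k$ must all be combined, and where the precise convolution convention and the sign pattern in the definition of $Q_k$ have to be matched.
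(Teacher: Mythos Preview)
Your proposal is correct and follows exactly the approach the paper has in mind: the paper's ``proof'' merely cites Rosenberg's Theorem~3.22 and the convolution estimate $|Q_k * h_k|\le C\,t^{k+1-n/2}$, and what you have written is a faithful outline of that classical Levi--Duhamel iteration. Your crude bound $\|K_k^{*\lambda}\|_\infty \le C^\lambda \mu_f(M)^{\lambda-1} t^{\lambda-1}/(\lambda-1)!$ already suffices for the lemma as stated; the sharper estimate the paper highlights is really needed only at the next stage, in the proof of the asymptotic expansion of $H_f(t,x,x)$.
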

The Lemma follows in exactly the same way as \cite{Ro}*{Theorem 3.22}, and simply relies on estimates of convolution operators which give that $|Q_k * h_k| \leq C\cdot t^{k+1-n/2}$.

\begin{proof}[Proof of Theorem \ref{thmhtr}]
Recall the definition that $A(t) \sim \sum_{i=0}^\infty a_i t^i$ as $t \downarrow 0$, if for all $k\geq K_o$
\[
\lim_{t\to 0}\frac{1}{t^k} \left[A(t) - \sum_{i=K_o}^k a_i t^i \right] =0.
\]

The estimate \eref{htr1} is now a direct consequence of the parametrix method and is due to  the fact that $h_k(t,x,x)=  \sum_{i=0}^k u_i(x,x) \, t^i$ and the term  $ Q_k * h_k(t,x,x)$ is at most of order $t^{k+1-n/2}$ for $k$ sufficiently large (see \cite{Ro}*{Proposition 3.23} for all the details).

For the second part of Theorem \ref{thmhtr} we first note that under these hypotheses, by Corollary \ref{th-weyl} and the fact that the eigenfunctions are an orthonormal basis of $L^2 (M, e^{-f} d\mu)$, the heat kernel is trace class. By the heat trace formula and the asymptotic expansion \eref{htr1},
\begin{equation*}
\begin{split}
\tr e^{-t\Delta_f} =  \int_M H_f(t,x,x) \,  d\mu_f(x)  =\sum_k e^{\lambda_k t} &\sim (4 \pi t)^{-n/2} \sum_{i=0}^\infty t^i \, \int_M u_i(x,x) e^{f(x)}\, d\mu_f(x)\\
&=(4 \pi t)^{-n/2} \sum_{i=0}^\infty t^i \, \int_M u_i(x,x)  \, d\mu (x).
\end{split}
\end{equation*}
\end{proof}

Note that for $f=0$ the coefficient functions $u_i$ corresponding to the heat kernel of the Laplacian over the manifold are inductively given by
\begin{equation} \label{u0_i}
u_{0,i}(x,y) = - r^{-i}(x,y) D^{-1/2}(y) \;   \int_0^r D^{1/2}(x(s))  \cdot   (\Delta_{x(s)} u_{i-1}) (x,x(s)) \cdot s^{i-1} ds
\end{equation}
and denote
\begin{equation} \label{a0_i}
a_{0,i}  =  \int_M u_{0,i}(x,x) \, d\mu(x).
\end{equation}
In this case, we also have   $u_{0,0}(x,x)=1$  and  as we have previously mentioned $u_{0,1}(x,x) = \frac 16 \, K(x)$, where $K(x)$ is the scalar curvature of the manifold at the point $x$.

For the weighted case, the definition of the $u_i$ in \eref{u_i} gives
\begin{equation*}
\begin{split}
a_0 &= \Vol(M) \\
a_1 & = \int_M u_{0,1}(x,x)  \, d\mu (x)  -  \int_M \left(\frac 12 \, \Delta f(x) + \frac 14 \, |\n f(x)|^2  \right)\, d\mu (x) \\
 & \ = \int_M \frac 16 \, K(x)  \, d\mu (x)  -  \int_M   \frac 14 \, |\n f(x)|^2  \, d\mu (x).
\end{split}
\end{equation*}
These coincide with the coefficients of $(4\pi t)^{-n/2}$ and  $(4\pi t)^{-n/2} t $ that we saw in Corollary \ref
{httr_drift} for less smooth $f$.

Using these estimates we can now prove the isospectrality results in Theorem \ref{isosp}
\begin{proof}[Proof of Theorem \ref{isosp}]F
By our assumption,
\[
\tr e^{-t\Delta_{f_1}}=\tr e^{-t\Delta_{f_2}} = \sum_k e^{\lambda_k t}.
\]
As a result, the leading term in \eref{htr2} corresponding to $i=0$ must have the same exponent in $t$, giving us $n=m$.  Moreover, the coefficients $a_0$ must coincide, giving us $\Vol(M)= \Vol(N)$.

Next we assume that $M$ and $N$ are orientable surfaces and that the weight functions have equal Dirichlet norms.  By isospectrality, the coefficients $a_1$ must also coincide, so that
\[
\int_M    K_M(x)   \, d\mu_M(x)   = \int_N  K_N (x) \, d\mu_N(x).
\]
By the Gauss-Bonnet theorem, $M$ and $N$ have identical Euler characteristic,
\[
\chi(M)=\chi(N).
\]
The result follows, since two compact oriented surfaces with the same Euler characteristic are diffeomorphic.

The compactness of the set of isospectral drifting Laplacians with smooth weight function follows immediately from the corresponding result for Schr\"odinger operators proven by J. Br\"uning, Theorem 3 of  \cite{bruning}.
\end{proof}

We would like to finish our paper with the following two remarks.

\begin{remark}
The unitary equivalence, of the drifting Laplacian to the Schr\"odinger operator $\Delta + V$ with $V = \frac{1}{2} \Delta f + \frac{1}{4} | \nabla f|^2$ for $V\in L^\infty$ implies that  the heat kernel of $\Delta + V$, $H_V(t,x,y)$,  is related to the heat kernel of the drifting Laplacian, $H_f(t,x,y)$ by the following formula
\[
H_f(t,x,y)=H_V(t,x,y) e^{\frac 12 (f(x)+f(y)\,)}.
\]

Note, that the corresponding parametrix for $\p/\p t + \Delta +V $ would now be $h_k(t,x,y)= \eta(x,y) \, G(t,x,y) \, u(t,x,y)$ with the same $u_i$ as in \eref{u_i}
\end{remark}

\begin{remark}
As we have seen in the proof of Corollary \ref{th-weyl}, $a_0$ determines Weyl's law for the eigenvalues of the operator and in the case of the drifting Laplacian this law is independent of the weight function $f$.

This is also illustrated in the simple case of a constant function $f$. In this setting the eigenvalues of the drifting Laplacian coincide with the eigenvalues of the Laplacian on a compact manifold even though the weighted volume of manifold is different from its Riemannian volume. As a result $\Delta_f$ and $\Delta$ have the same heat trace, independently of $f$. What is fairly surprising is the fact that Weyl's asymptotic formula is independent of the function $f$ for any $f$ satisfying $\frac{1}{2} \Delta f + \frac{1}{4} | \nabla f|^2 \in L^\infty$.
\end{remark}

\begin{bibdiv}
\begin{biblist}

\bib{bruning}{article}{
 author={Br{\"u}ning, Jochen},
     titke={On the compactness of isospectral potentials},
   journal={Comm. Partial Differential Equations},
    volume={9},
      year={1984},
    number={7},
     pages={687--698},
     review={\MR{745021}},
}

\bib{css}{article}{
author={Colbois, Bruno},
author={El Soufi, Ahmad},
author={Savo, Alessandro},
     title={Eigenvalues of the {L}aplacian on a compact manifold with
              density},
   journal={Comm. Anal. Geom.},
    volume={23},
      year={2015},
    number={3},
     pages={639--670},
 review={\MR{3310527}},
}

\bib{asma1}{article} {
    author={Hassannezhad, Asma},
     title={Eigenvalues of perturbed {L}aplace operators on compact
              manifolds},
   journal={Pacific J. Math.},
    volume={264},
      year={2013},
    number={2},
     pages={333--354},
   review={\MR{3089400}},
   }

\bib{tubhk}{book}{
     title={The ubiquitous heat kernel},
   series={Contemporary Mathematics},
    volume={398},
    editor={Jorgenson, Jay} Editor={Walling, Lynne},
      note={Papers from a special session of the AMS Meeting held in
              Boulder, CO, October 2--4, 2003},
 publisher={American Mathematical Society, Providence, RI},
      year={2006},
     pages={vi+402},
      review={\MR{2218505}},
      }

 \bib{kac}{article}{author={Kac, Mark},
     title={Can one hear the shape of a drum?},
   journal={Amer. Math. Monthly},
    volume={73},
      year={1966},
    number={4, part II},
     pages={1--23},
   review={\MR{0201237}},
}

\bib{Ro}{book}{
   author={Rosenberg, Steven},
   title={The Laplacian on a Riemannian manifold},
   series={London Mathematical Society Student Texts},
   volume={31},
   note={An introduction to analysis on manifolds},
   publisher={Cambridge University Press, Cambridge},
   date={1997},
   pages={x+172},
   isbn={0-521-46300-9},
   isbn={0-521-46831-0},
   review={\MR{1462892}},
   doi={10.1017/CBO9780511623783},
}

 \bib{smith-z}{article}{
 author={Smith, Hart F.},
  author={Zworski, Maciej},
     title={Heat traces and existence of scattering resonances for bounded
              potentials},
   journal={Ann. Inst. Fourier (Grenoble)},
    volume={66},
      year={2016},
    number={2},
     pages={455--475},
      review={\MR{3477881}},
      }

\bib{vdbs}{article}{
author={van den Berg, M.},
author={Srisatkunarajah, S.},
     title={Heat equation for a region in {${\bf R}^2$} with a
              polygonal boundary},
   journal={J. London Math. Soc. (2)},
    volume={37},
      year={1988},
    number={1},
     pages={119--127},
   review={\MR{921750}},
   }

\bib{weyl1}{article}{
    author={Weyl, Hermann},
     title={Das asymptotische {V}erteilungsgesetz der {E}igenwerte
              linearer partieller {D}ifferentialgleichungen (mit einer
              {A}nwendung auf die {T}heorie der {H}ohlraumstrahlung)},
   journal={Math. Ann.},
    volume={71},
      year={1912},
    number={4},
     pages={441--479},
   review={\MR{1511670}},
   }
		
\bib{wuwu}{article}{
author={Wu, Jia-Yong},
author={ Wu, Peng},
     title={Heat kernel on smooth metric measure spaces and applications},
   journal={Math. Ann.},
    volume={365},
    year={2016},
    number={1-2},
     pages={309--344},
      review={\MR{3498912}},
      }

\end{biblist}
\end{bibdiv}

\end{document}